\newtheorem{lem}{Lemma}[section]
\newtheorem{thm}[lem]{Theorem}
\newtheorem{prob}[lem]{Problem}
\newtheorem{conj}[lem]{Conjecture}
\begin{document}
\title{A complement of the Erd\H{o}s-Hajnal problem on paths with
equal-degree endpoints}
\date{}
\author{Zhen Liu\footnote{Email: 1552580575@qq.com}, ~Qinghou Zeng\footnote{Research supported by National Key R\&D Program of China (Grant No. 2023YFA1010202) and National Natural Science Foundation of China (Grant No. 12371342). Email: zengqh@fzu.edu.cn (Corresponding
author)}\\
{\small Center for Discrete Mathematics, Fuzhou University, Fujian, 350003, China}}

\maketitle

\maketitle
\begin{abstract}
Answering a question of Erd\H{o}s and Hajnal,  Chen and Ma proved that for all \(n\geq600\)  every graph with \(2n + 1\) vertices and at least \(n^2 + n+1\) edges contains two vertices of equal degree connected by a path of length three. The complete bipartite graph $K_{n,n+1}$ shows that this edge bound is sharp. In this paper, we develop a novel approach to handle graphs with large equal degrees, which enables us to establish the result for all $n\ge2$, thereby fully resolving the problem posed by Erd\H{o}s and Hajnal.
\end{abstract}

\section{Introduction}\label{Intro}
In graph theory, there is a fundamental fact that every graph with at least two vertices has two vertices with equal degree. Naturally, we wonder what additional properties must these vertices satisfy under certain conditions.
In 1991, Erd\H{o}s and Hajnal proposed the following related question (see also Problem \# 816 in Thomas Bloom's collection of Erd\H {o}s problems \cite{Erd816}).

\begin{prob} [Erd\H{o}s \cite{Erd1991}]\label{EH}
Is it true that every \((2n + 1)\)-vertex graph with \(n^{2}+n + 1\)
edges contains two vertices of the same degree that are joined by a path of length three?
\end{prob}

The bound on the number of edges would be sharp if true, as shown by the complete bipartite graph \(K_{n,n + 1}\). Recently, Chen and Ma \cite{CM2025} provided a stronger version of a solution to Problem \ref{EH} as follows.

\begin{thm}[Chen and Ma \cite{CM2025}]\label{u3}
Let $n\geq 600$. The unique \((2n + 1)\)-vertex graph with at least \(n^{2}+n\) edges,
that does not contain two vertices of the same degree joined by a path of length three, is the
complete bipartite graph \(K_{n,n + 1}\).
\end{thm}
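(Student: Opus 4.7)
The plan is to argue by contradiction: suppose $G$ is a $(2n+1)$-vertex graph with $e(G)\geq n^2+n$, no two same-degree vertices joined by a path of length three, and $G\neq K_{n,n+1}$, and aim for a contradiction. The starting point is the following \emph{local constraint}: for any pair $u,v$ with $d(u)=d(v)$ there cannot exist an edge $xy\in E(G)$ with $x\in N(u)\setminus\{v\}$, $y\in N(v)\setminus\{u\}$ and $x\neq y$, for otherwise $u,x,y,v$ would be a forbidden length-three path. Every subsequent step plays this local constraint off against the global bound $e(G)\geq n^2+n=e(K_{n,n+1})$.

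Next I would use the edge count to constrain the degree sequence. The degree sum is at least $2n^2+2n$ across $2n+1$ vertices, so the average degree exceeds $n$, and since a graph on $2n+1$ vertices realises at most $2n$ distinct degrees there is substantial repetition. My first structural target is $\Delta(G)\leq n+1$: if some $v^{\star}$ has degree $\Delta\geq n+2$, then pigeonholing on the degrees inside $N(v^{\star})$ produces two vertices $u,w\in N(v^{\star})$ with $d(u)=d(w)$, and the edge density forced around $v^{\star}$ by the count yields an edge from $N(u)\setminus\{w\}$ to $N(w)\setminus\{u\}$, violating the local constraint.

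Once $\Delta(G)\leq n+1$, the edge bound pushes almost all vertices into the classes $V_{n+1}$ and $V_{n}$ of degrees $n+1$ and $n$, with $|V_{n+1}|$ close to $n$ and $|V_{n}|$ close to $n+1$. Applying the local constraint to every pair $u,v\in V_{n+1}$: whenever $x\in N(u)\setminus N(v)$ and $y\in N(v)\setminus N(u)$ with $x,y\notin\{u,v\}$, one must have $xy\notin E(G)$. Iterating across all pairs in $V_{n+1}$ forces the neighbourhoods $N(u)$ for $u\in V_{n+1}$ to coincide in a common independent set, which by cardinality must equal $V_{n}$; the symmetric analysis on $V_{n}$ then delivers the full bipartite structure of $K_{n,n+1}$, contradicting $G\neq K_{n,n+1}$.

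The main obstacle is precisely this rigidification step for a large equal-degree class. Pairwise, the local constraint forbids only individual non-edges, and combining the $\binom{|V_{n+1}|}{2}$ of them into a clean global statement about a shared neighbourhood requires tight bookkeeping: one must simultaneously rule out small perturbations of $K_{n,n+1}$ that for small $n$ superficially respect the edge bound, and control the interaction between $V_{n+1}$ and $V_{n}$ through their mixed common neighbourhoods. The crude union-bound version of this argument loses factors polynomial in $n$, which is presumably where the hypothesis $n\geq600$ in Theorem \ref{u3} enters; the refined identity hinted at in the abstract is, I expect, what will allow the threshold to descend to $n\geq2$.
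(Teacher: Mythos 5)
Your outline has the right local constraint and the right global target, but two of its load-bearing steps do not work as stated, and the proposal never engages with the device that actually makes the proof go through, namely the parameter $\beta$ (the largest degree attained by at least two vertices). First, the claim that $\Delta\geq n+2$ lets you ``pigeonhole on the degrees inside $N(v^{\star})$'' to find $u,w\in N(v^{\star})$ with $d(u)=d(w)$ is false: $N(v^{\star})$ has only $\Delta\leq 2n$ vertices while the degrees can a priori take any of the $2n$ values $1,\dots,2n$, so no collision is forced. Bounding $\Delta$ is genuinely the hard part; in the paper (following Chen--Ma's Lemma~5, reproduced as Lemma~\ref{Small-equal-degree}) it is obtained by a careful maximization of the degree sum over sequences constrained by Lemma~\ref{d(u)=d(v)}, and the conclusion is the weaker dichotomy ``$\beta\geq\Delta-1$ or $\Delta\leq n+1$,'' not an unconditional bound on $\Delta$. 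Second, your ``rigidification'' of the class $V_{n+1}$ is exactly the step you concede you cannot execute; the paper does not iterate the pairwise constraint over all of $V_{n+1}$ at all. Instead it fixes a single pair $u,v$ with $d(u)=d(v)=\beta$, partitions $V(G)$ into $B=N(u)\cap N(v)$, $A_u$, $A_v$, $D$, counts non-edges in $\overline{G}$, and --- this is the new ingredient --- observes that $|B|-|D|=2c+1$ forces, by a genuine pigeonhole inside $B$ over the $|D|+1$ possible values of $|N_D(\cdot)|$, a \emph{second} equal-degree pair $u_1,v_1\in B$, to which the same decomposition is applied inside $D$. This two-level argument yields $e(\overline{G})>n^2$ unless $G\cong K_{n,n+1}$ (Lemmas~\ref{c-Complement-G} and~\ref{Large-equal-degree}).

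Finally, your planned endgame is more complicated than necessary: once one knows $\beta\leq n+1$ (with equality forcing $K_{n,n+1}$) and the dichotomy of Lemma~\ref{Small-equal-degree}, the remaining case $\beta\leq n$, $\Delta=n+1$ dies in one line, since then at most one vertex has degree exceeding $n$ and $2e(G)\leq 2n\cdot n+(n+1)<2n^2+2n$. There is no need to classify the near-extremal degree sequences or to control $V_n$ and $V_{n+1}$ simultaneously, and no polynomial losses of the kind you anticipate; the threshold $n\geq 600$ in Chen--Ma comes from their version of the $\Delta$-bound, not from a union bound over pairs. As it stands your proposal is a plausible strategy sketch with the two central lemmas missing, so it does not constitute a proof.
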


In this paper, we show that Theorem \ref{u3} holds for all $n\ge2$. We mention that our method is different and useful for graphs with large equal degrees.
\begin{thm}\label{LZ}
 Let $n\ge2$. The unique \((2n + 1)\)-vertex graph with at least \(n^{2}+n\) edges,
that does not contain two vertices of the same degree joined by a path of length three, is the
complete bipartite graph \(K_{n,n + 1}\).
\end{thm}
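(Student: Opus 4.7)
The plan is to assume a counterexample $G \not\cong K_{n,n+1}$ on $2n+1$ vertices with $e(G) \geq n^2+n$ and no two equal-degree vertices joined by a path of length three, and to derive a contradiction for every $n \geq 2$. For each integer $d$ set $V_d := \{v \in V(G) : \deg_G(v) = d\}$. The hypothesis is then equivalent to the local constraint I will use throughout: for any distinct $u,v \in V_d$ and any $a \in N(u)$, $b \in N(v)$ with $|\{u,v,a,b\}| = 4$, the pair $ab$ is a non-edge of $G$. In particular, if $u,v$ are non-adjacent and share no common neighbor, then the bipartite graph between $N(u)$ and $N(v)$ is edgeless.

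First I would set up the edge-counting framework. A convexity argument on $\sum_v \deg_G(v) \geq 2(n^2+n)$, combined with the fact that all degrees lie in $\{0,1,\ldots,2n\}$, shows that the degree sequence must be sharply concentrated near $\{n,n+1\}$; in particular at least one of $|V_n|$, $|V_{n+1}|$ is linear in $n$, and the total deviation $\sum_v |\deg_G(v)-n|$ is bounded. Using the local constraint I would then show that any two non-adjacent vertices in a large $V_d$ have almost disjoint neighborhoods (at most one common vertex), which already enforces a near-bipartite structure on the union of their neighborhoods.

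The core new ingredient --- the novel method for graphs with \emph{large equal degrees} advertised in the abstract --- is to combine the $\binom{|V_d|}{2}$ simultaneous local constraints from a single large class into a global structural statement. Specifically, I would show that if $|V_d|$ is sufficiently large for some $d \in \{n, n+1\}$, then there exists a common set $U$ of size roughly $n$ such that every vertex of $V_d$ has its neighborhood essentially contained in $U$, and $G[U]$ is almost edgeless. Feeding this rigidity back into the edge count $e(G) \geq n^2+n$ should force an exact balanced bipartition, yielding $G \cong K_{n,n+1}$ and hence the contradiction. The complementary regime --- in which every degree class is small --- is handled by refining the local degree-exchange analysis of Chen and Ma \cite{CM2025}, which is well-behaved once multiplicities are bounded independently of $n$.

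The principal obstacle lies in the third step: extracting a rigid bipartite structure purely from pairwise $P_4$-freeness. The local constraint does not directly forbid edges inside $V_d$ or between common neighbors of a pair $u,v \in V_d$, and these ``leak paths'' must be classified carefully. My expectation is that resolving them requires iterating the constraint along walks whose internal vertices alternate in and out of $V_d$, rather than merely summing pairwise information across $V_d$. Once this rigidity is achieved uniformly in $n$, the threshold $n \geq 600$ of Chen and Ma \cite{CM2025} should vanish automatically, covering every $n \geq 2$.
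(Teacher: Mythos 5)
Your proposal is an outline rather than a proof, and the two steps on which everything rests are both unestablished. First, the claim that a convexity argument on $\sum_v \deg_G(v)\ge 2(n^2+n)$ forces $|V_n|$ or $|V_{n+1}|$ to be linear in $n$ is unjustified and false as an arithmetic statement: the degree bound $\deg_G(v)\le 2n$ only pins the average degree near $n+\tfrac12$, and a degree sequence with two entries equal to $2n$ and one entry equal to each of $1,\dots,2n-1$ already has sum $2n^2+3n\ge 2(n^2+n)$ while every value occurs at most twice. Any concentration must therefore be extracted from the $P_4$-freeness itself, which is exactly what the paper does: it introduces $\beta$, the \emph{largest} repeated degree, proves $\beta\le n+1$ with equality only for $K_{n,n+1}$ (Lemma \ref{Large-equal-degree}), and disposes of the regime $\beta\le n$ via the Chen--Ma dichotomy of Lemma \ref{Small-equal-degree} ($\beta\ge\Delta-1$ or $\Delta\le n+1$) together with the trivial bound $2e(G)\le 2n\beta+\Delta$. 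Second, you explicitly concede that you do not know how to carry out your ``core new ingredient'' of converting the $\binom{|V_d|}{2}$ pairwise constraints on a large degree class into a rigid bipartition; that is precisely the missing mathematics, so the proposal does not constitute a proof of the theorem.

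For what it is worth, the paper's actual device for large equal degrees is much more concrete than aggregating constraints over a whole class $V_d$: it fixes one pair $u,v$ of degree $\beta$, partitions $V(G)$ into $A_u$, $A_v$, $B=N(u)\cap N(v)$ and $D$, and lower-bounds $e(\overline G)$ against the ceiling $n^2$. The key trick is that when $\beta$ is large one has $|B|\ge |D|+3$, so by pigeonhole two vertices of $B$ have equally many neighbours in $D$ and hence (since $e(A_u,B)=e(A_v,B)=e(B)=0$) equal degrees; running the same decomposition on that new pair inside $D$ produces a second layer of forced non-edges, and the combined count of $e(\overline G)$ exceeds $n^2$ unless $G\cong K_{n,n+1}$. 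You would additionally need a separate treatment of $n\in\{2,3,4\}$, where the asymptotic vocabulary of your sketch (``linear in $n$'', ``sufficiently large'') gives no purchase; the paper handles these by ad hoc degree-sequence arguments bounding $\Delta$ by $2n-2$.
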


Chen and Ma \cite{CM2025} also obtained a similar result for graphs with even number of vertices.
\begin{thm}[Chen and Ma \cite{CM2025}]
There exists an integer \(n_0 > 0\) such that the following holds for all \(n\geq n_0\). The unique \(2n\)-vertex graph with at least \(n^2 - 1\) edges, that does not contain two vertices of the same degree joined by a path of length three, is the complete bipartite graph \(K_{n - 1,n + 1}\).
\end{thm}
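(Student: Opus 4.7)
Let $G$ be a $2n$-vertex graph with $e(G)\ge n^{2}-1$ that contains no path of length three between two vertices of equal degree; the goal is to show that for all sufficiently large $n$, $G$ must be $K_{n-1,n+1}$. The plan is to adapt the machinery behind Theorem~\ref{LZ} to the new parameter regime governed by the sharp identity $(n-1)(n+1)=n^{2}-1$.

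First I would record the structural consequence of the hypothesis: for any two distinct vertices $u,v$ with $d(u)=d(v)$, there is no edge $xy$ with $x\in N(u)\setminus\{v\}$, $y\in N(v)\setminus\{u\}$, and $x\ne y$, since any such edge produces the forbidden $P_{4}$ $u$-$x$-$y$-$v$. In particular $N(u)\cap N(v)$ is independent and no edge crosses between $N(u)\setminus\{v\}$ and $N(v)\setminus\{u\}$. Combining this with the pigeonhole on the $2n$ possible degree values and with $e(G)\ge n^{2}-1$, which forces the average degree to be $n-1/n$, I would extract a degree $d^{*}$ close to $n$ whose class $A:=V_{d^{*}}$ is linearly large (say $|A|\ge cn$); the usual Erd\H{o}s--Gallai argument that rules out all degrees being distinct is used here, and then variance bounds confirm the concentration.

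The core step — and the main expected obstacle — is to bootstrap this into the conclusion that $G$ is complete bipartite. Running the paper's novel approach on $A$ and its neighborhood $B:=N(A)$, I would establish in turn that (i) $A$ is independent, using the fact that every pair in $A$ has equal degree and the no-crossing structural consequence; (ii) $V(G)=A\cup B$, since any vertex outside either creates an equal-degree $P_{4}$ with some $a\in A$ or violates the concentration of the degree sequence; and (iii) $B$ is independent, since any edge $b_{1}b_{2}\in B$ together with two common neighbors $a,a'\in A$ (abundant by the edge count) immediately yields the $P_{4}$ $a$-$b_{1}$-$b_{2}$-$a'$ between the equal-degree pair $a,a'\in V_{d^{*}}$. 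What makes this reduction harder than in the odd case is that the slack in the edge bound is only $1$: any single missing edge of the bipartite graph on $(A,B)$, or any chord inside $B$, can drop the edge count below $n^{2}-1$ unless the partition is exactly $(n-1,n+1)$ or $(n,n)$, so the stability argument has to be run with essentially no error budget, and the counting must be pushed harder than in the odd-order proof because $|A|$ can be as small as $n-1$.

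Once $G$ is shown to be a complete bipartite graph $K_{a,b}$ with $a+b=2n$, the identity $ab=n^{2}-\bigl((b-a)/2\bigr)^{2}$ together with $ab\ge n^{2}-1$ and the parity $a+b\equiv 0\pmod 2$ forces $(a,b)\in\{(n,n),(n-1,n+1)\}$. The balanced case $K_{n,n}$ is ruled out because every vertex has degree $n$, and for any $u$ on one side and $x$ on the other one has the $P_{4}$ $u$-$y_{1}$-$y_{2}$-$x$ with $y_{1}$ on the side of $x$ distinct from $x$ and $y_{2}$ on the side of $u$ distinct from $u$, contradicting the hypothesis. Hence $G=K_{n-1,n+1}$, as claimed. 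The hardest part of the execution, as noted, is step (iii) of the bipartiteness reduction: the near-tight edge count leaves no room for approximation, and the novel-approach machinery from Theorem~\ref{LZ} must be re-tuned so that it still works when the dominant degree class can be substantially smaller than in the odd-order setting and when the conclusion allows two candidate bipartitions that must be separated cleanly.
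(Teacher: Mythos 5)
Your outline contains the right structural observation (the no-crossing/independence consequences of a forbidden equal-degree \(P_4\), which is exactly \eqref{ABuv'}) and the correct endgame (ruling out \(K_{n,n}\) and isolating \(K_{n-1,n+1}\) from \(ab\ge n^2-1\)), but the middle of the argument has a genuine gap that is precisely where all the work lies. You assert that one can extract a degree value \(d^*\) close to \(n\) whose class \(A\) has linear size \(|A|\ge cn\), citing "the usual Erd\H{o}s--Gallai argument" and "variance bounds." Neither gives this. The pigeonhole/handshake argument only yields \emph{one} repeated degree, and the average-degree condition \(\bar d\ge n-1/n\) is compatible with degree sequences that are spread out (the degree sum of \(2n\) nearly-distinct values in \(\{0,\dots,2n-1\}\) can reach about \(2n^2-n\), comfortably above \(2n^2-2\)), so no concentration follows from counting alone. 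In fact the possibility that the largest repeated degree \(\beta\) is small while \(\Delta\) is large is the main enemy, and the paper must exclude it by a separate, lengthy optimization over admissible degree sequences (Lemma~\ref{beta n+2}, proved in the Appendix), which shows \(\beta\ge\Delta-1\) or \(\Delta\le n+2\), combined with Lemma~\ref{beta=n} (\(\beta\le n+1\), with equality forcing \(K_{n-1,n+1}\)) and Lemma~\ref{beta=3} (\(\beta\ge 3\)). Only \emph{after} pinning \(\Delta\in\{n,n+1,n+2\}\) and \(\beta\le n\) does a counting argument produce the large class of degree-\(n\) vertices you want to start from; you have assumed the conclusion of that phase as your starting point.

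Your steps (i)--(iii) also lean on unproved claims. Independence of an equal-degree class \(A\) is not immediate: the forbidden configuration is a path of length exactly three, so an edge \(aa'\) inside \(A\) is not directly excluded (the paper needs Lemma~\ref{d(u)=d(v)} and the common-neighborhood condition to handle adjacent equal-degree pairs). Step (ii), that \(V(G)=A\cup N(A)\) "or the concentration is violated," is circular given that the concentration itself was never established. The paper instead never tries to prove \(G\) is bipartite as an intermediate step: bipartiteness falls out only in the tight case \(\beta=n+1\) of Lemma~\ref{beta=n}, where the complement-edge count \eqref{c-Complement-G'} is shown to force \(e(B,D)=|B||D|\) and all the relevant non-edge classes to vanish simultaneously; all other parameter ranges are killed by contradiction, not by a stability/bootstrapping argument. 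As written, your proposal would need the entire content of Lemmas~\ref{beta=n} and~\ref{beta n+2} supplied before its first substantive step is available.
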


Our method can also extends to graphs with an even number of vertices as follows.
\begin{thm}\label{2n}
Let $n\ge3$ be an integer. The unique \((2n)\)-vertex graph with at least \(n^{2}-1\) edges,
that does not contain two vertices of the same degree joined by a path of length three, is the
complete bipartite graph \(K_{n-1,n + 1}\).
\end{thm}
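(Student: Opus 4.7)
The plan is to mirror the proof of Theorem~\ref{LZ}, adapting each step to the $2n$-vertex setting and to the new extremal graph $K_{n-1,n+1}$, whose degree sequence is $(n+1)^{n-1}(n-1)^{n+1}$ with exactly $n^2-1$ edges. Suppose for contradiction that $G$ is a $2n$-vertex graph with $e(G) \geq n^2-1$, no $P_4$ joining two equal-degree vertices, and $G \not\cong K_{n-1,n+1}$. The basic local restriction provided by the hypothesis is that whenever $d_G(u) = d_G(v)$, no edge $xy$ exists with $x \in N(u)\setminus\{v\}$, $y \in N(v)\setminus\{u\}$ and $x \neq y$; equivalently, off the common-neighbor diagonal the bipartite graph between $N(u)$ and $N(v)$ is empty. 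Pigeonhole on degrees in $\{0,\ldots,2n-1\}$, together with the incompatibility of degrees $0$ and $2n-1$, guarantees a degree class $D_d := \{v : d_G(v) = d\}$ of size $s := \max_d |D_d| \geq 2$, and the proof branches on the value of $s$.

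For large $s$, I would invoke the novel approach announced in the abstract for Theorem~\ref{LZ}: fixing $u_1,\ldots,u_s \in D_d$ and combining the pairwise no-$P_4$ restrictions, the union $U := \bigcup_i N(u_i)$ must carry essentially no edges apart from those along the common-neighbor diagonal. Pushing this further shows that either $d$ is small, capping $e(G)$ strictly below $n^2-1$, or $G$ is essentially bipartite with $D_d$ on one side and $U$ on the other; a careful count of the two parts, using that $K_{n-1,n+1}$ has sides of sizes $n-1$ and $n+1$, then pins down $G \cong K_{n-1,n+1}$.

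For small $s$, I would argue by edge counting: $\sum_d d|D_d| = 2e(G) \geq 2n^2-2$, combined with the local constraints produced by each pair in each $D_d$, tightens the feasible degree sequences to $(n+1)^{n-1}(n-1)^{n+1}$, after which a short stability step yields $G \cong K_{n-1,n+1}$.

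The main obstacle I anticipate lies in the intermediate regime where $s$ is of moderate size, for which neither the structural lemma driving the large-$s$ case nor the counting argument handling the small-$s$ case is by itself decisive. This case is genuinely more delicate than in the odd setting of Theorem~\ref{LZ}: the degree gap of the extremal graph is now $2$ rather than $1$, so nearly balanced configurations such as near-$K_{n,n}$ must be explicitly excluded even though they carry many edges. I would address this by refining the local analysis of intersection patterns among the neighborhoods of vertices in $D_d$, tracking the common-neighbor diagonal much as in the key lemma behind Theorem~\ref{LZ}, but with careful attention to the parity of $2n$ and to the extra vertex on the larger side of $K_{n-1,n+1}$.
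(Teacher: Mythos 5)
Your proposal is an outline of intentions rather than a proof, and the places where it defers the work are exactly the places where the actual argument lives. You yourself flag an unresolved ``intermediate regime'' for moderate $s$, and both of your branches rest on unproved claims: in the large-$s$ branch, ``pushing this further shows that either $d$ is small \dots or $G$ is essentially bipartite'' is asserted without any mechanism, and in the small-$s$ branch the step from $2e(G)\ge 2n^2-2$ to the exact degree sequence $(n+1)^{n-1}(n-1)^{n+1}$ is precisely the theorem in disguise. Moreover, your organizing parameter $s=\max_d|D_d|$ (the most populous degree class) is not the one that makes the argument go through; the paper works with $\beta$, the \emph{largest} degree value attained by at least two vertices, which is what allows one to control the maximum degree (either $\beta\ge\Delta-1$ or $\Delta\le n+2$, the analogue of Chen--Ma's Lemma~5) and to run a sharp edge count.

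The concrete missing idea is the nested neighborhood decomposition that constitutes the paper's ``novel approach.'' Given two vertices $u,v$ of degree $\beta$, one sets $B=N(u)\cap N(v)$, $A_u$, $A_v$, $D=V(G)\setminus(N(u)\cup N(v)\cup\{u,v\})$ and counts non-edges forced by the no-$P_4$ condition; the key point is that $|B|-|D|=2c+2$ with $c=\beta-n-\mathbf{1}_{uv}$, so when $c\ge1$ the pigeonhole principle produces two vertices of $B$ with equally many neighbors in $D$, hence equal degrees (all their other neighbors being $u$, $v$), and one applies the \emph{same} decomposition to this pair inside $D$ to extract lower bounds on $\overline{e}(D)$ and $\overline{e}(B,D)$ in terms of a second parameter $\gamma$. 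This yields $e(\overline{G})\ge (n^2-n+1)+c^2-1-\mathbf{1}_{uv}$ and hence $\beta\le n+1$, with equality forcing $G\cong K_{n+1,n-1}$. After that the proof is a finite case analysis on $\Delta\in\{n,n+1,n+2\}$ (plus separate treatment of $n\in\{3,4,5\}$), each case killed by a degree-sum bound or an explicit equal-degree $P_4$. None of this is recoverable from your sketch, so as written the proposal has a genuine gap at its core rather than a repairable omission.
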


\noindent\textbf{Notation}. Let $G$ be a graph and $\overline{G}$ the complement graph of $G$. For any $v\in V(G)$, denote by $N_{G}(v)$ the set of \emph{neighbors} of $v$ in $G$ and $d_G(v)$ the \emph{degree} of $v$ in $G$. Let $\Delta_G$ be the maximum degree of the vertices in  $G$. For any $S\subseteq V(G)$, let $G[S]$ denote the induced subgraph of $G$ on $S$, and $N_S(v)=N(v)\cap S$ for any $v\in V(G)$.  For each $S, T\subseteq V(G)$, let \(e_G(S)\) denote the number of edges with both endpoints in \(S\), and $e_G(S,T)$ denote the number of edges of $G$ with one end in $S$ and the other end in $T$.  If $S=\{v\}$, then we simply write $e_G(v,T)$ for $e_G(\{v\},T)$. For convenience, we also write $\overline{e}(S)=\binom{|S|}{2}-e_G(S)$ and $\overline{e}(S,T)=|S||T|-e_G(S,T)$. We will drop the reference to $G$ when there is no danger of confusion.


\section{Proof of Theorem \ref{LZ}}
Throughout this section, for any integer $n\ge2$, let \(G\) be a graph on \(2n + 1\) vertices with at least
\(n^2 + n\) edges such that \(G\) contains no two vertices of the same degree joined by a path of
length three. We aim to prove that \(G\) must be the complete bipartite graph \(K_{n,n + 1}\).

We follow the parameter \(\beta\) defined by Chen and Ma \cite{CM2025}. Let \(\beta\) be the largest integer such that \(G\) contains two vertices of degree \(\beta\).
Suppose that $u$ and $v$ are two vertices in $G$ with
$$d(u) = d(v) = \beta,$$
and let $\textbf{1}_{uv}=1$ if $uv\in E(G)$ and $\textbf{1}_{uv}=0$ if $uv\notin E(G)$.
Set $B := N(u) \cap N(v)$,
$A_u := N(u)\setminus(\{v\} \cup B)$, $ A_v := N(v)\setminus(\{u\} \cup B)$ and $D :=V(G)\setminus(N(u)\cup N(v)\cup\{u,v\})$.
 Let $|B|=x$. Then
\[
 |A_u| =|A_v|=\beta-x-\textbf{1}_{uv} \;\,\text{and}\;\,  |D|=x-2(\beta-n-\textbf{1}_{uv})-1.
 \]
In what follows, we aim to give a desired lower bound of $e(\overline{G})$ to contradict the fact $e(\overline{G})=\binom{2n+1}{2}-e(G)\le n^2$.

Since $G$ contains no path of length three connecting $u$ and $v$, we have
\begin{equation}\label{ABuv}
e(A_u, B) = e(A_v, B) = e(B) = e(A_u, A_v) = 0.
\end{equation}
It follows that
\begin{align*}
\overline{e}(V(G)\setminus D)&=\overline{e}(u,A_v)+ \overline{e}(v,A_u)+\overline{e}(A_u,A_v)+ \overline{e}(B,A_u)+ \overline{e}(B,A_v)+\overline{e}(B)+\overline{e}(u,v)\\
&=2(\beta-x-\textbf{1}_{uv})+(\beta-x-\textbf{1}_{uv})^2+2x(\beta-x-\textbf{1}_{uv})+\binom{x}{2}+(1-\textbf{1}_{uv})\\
&=(\beta+1-\textbf{1}_{uv})^2-\frac12(x^2+5x)-\textbf{1}_{uv}.
\end{align*}
This together with the fact $\overline{e}(\{u,v\},D)=2|D|$ implies that
\begin{align}\label{Complement-G}
e(\overline{G})&\ge\overline{e}(V(G)\setminus D)+\overline{e}(\{u,v\},D)+\overline{e}(B,D)+\overline{e}(D)\notag\\
&=(\beta+1-\textbf{1}_{uv})^2-\frac12(x^2+5x)-\textbf{1}_{uv}+2(2n+x-2\beta+2\cdot\textbf{1}_{uv}-1)+\overline{e}(B,D)+\overline{e}(D)\notag\\
&=(\beta-1-\textbf{1}_{uv})^2+4n-\frac12(x^2+x)-2-\textbf{1}_{uv}+\overline{e}(B,D)+\overline{e}(D).
\end{align}

We first deal with the situation when $\beta$ is relatively large. For convenience, let $c:=\beta-n-\textbf{1}_{uv}$.
\begin{lem}\label{c-Complement-G}
If $c\ge1$, then we have
\[
e(\overline{G})\ge n^{2}+c^2-c-\textbf{1}_{uv}.
\]
\end{lem}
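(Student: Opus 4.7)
The plan is to start from the lower bound (\ref{Complement-G}), substitute $\beta = n + c + \mathbf{1}_{uv}$, and rearrange. Expanding $(\beta-1-\mathbf{1}_{uv})^2 = (n+c-1)^2$ and comparing against $n^2+c^2-c-\mathbf{1}_{uv}$, the lemma reduces to showing
\[
\overline{e}(B,D) + \overline{e}(D) \;\geq\; \tfrac{x(x+1)}{2} - 2n(c+1) + c + 1,
\]
which via the handshake lemma is equivalent to the degree-sum bound $\sum_{v\in V(G)} d(v) \leq 2(n^2+n-c^2+c+\mathbf{1}_{uv})$.

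My first step is to rule out the subcase $\mathbf{1}_{uv}=1$ entirely. If $uv\in E(G)$, then for any two distinct $b,b'\in B$, the vertices $b,u,v,b'$ form a path of length $3$ (using $bu,uv,vb'\in E(G)$). The hypothesis then forces $d(b)\neq d(b')$, so all vertices of $B$ have pairwise distinct degrees. However, from (\ref{ABuv}) we have $N(b)\subseteq \{u,v\}\cup D$, so $d(b)\in\{2,3,\ldots,|D|+2\}$, a set of only $|D|+1 = x-2c$ values. Since $|B|=x>x-2c$ whenever $c\geq 1$, pigeonhole produces two vertices of $B$ with equal degree, a contradiction. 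Hence $\mathbf{1}_{uv}=0$ under the hypotheses.

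Next, with $\mathbf{1}_{uv}=0$ and $c\geq 1$, I would apply the degree bounds coming from (\ref{ABuv}): each $a\in A_u\cup A_v$ has $N(a)\subseteq\{u\}\cup A_u\cup D$ (resp.\ $\{v\}\cup A_v\cup D$), so $d(a)\leq |A_u|+|D|=n-c-1$; each $b\in B$ has $N(b)\subseteq\{u,v\}\cup D$, so $d(b)\leq |D|+2 = x-2c+1$; and each $w\in D$ satisfies $d(w)\leq \beta$ by the definition of $\beta$ (up to possible exceptions treated below). Summing these contributions, writing $y:=n+c-x\in[0,n-c-1]$, and simplifying algebraically yields
\[
\sum_{v\in V(G)} d(v) \;\leq\; 2(n^2+n-c^2+c) - y\bigl(n+3c+3-y\bigr).
\]
Since $y\in[0,n-c-1]$ makes the slack $y(n+3c+3-y)$ nonnegative, this is exactly the desired bound.

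The main obstacle will be handling a vertex $w\in D$ with $d(w)>\beta$, where the naive bound $d(w)\leq\beta$ fails. Such a $w$ has a degree unique in $V(G)$, and its excess contribution to $\sum d(v)$ needs to be controlled. I expect the resolution to exploit further path-of-length-$3$ constraints triggered by $w$: specifically, for any $b,b'\in N(w)\cap B$ and any $w'\in N(w)\cap D$, the path $b-w-w'-b'$ forces $d(b)\neq d(b')$ whenever $w'\in N(b')$, which via a pigeonhole argument on the limited range $\{2,\dots,|D|+2\}$ of $B$-degrees---mirroring the $\mathbf{1}_{uv}=1$ analysis---either forbids any such excess when $y=0$ or absorbs it into the slack when $y\geq 1$.
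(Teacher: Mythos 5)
Your reduction of the lemma to the degree-sum bound $\sum_{v}d(v)\le 2(n^2+n-c^2+c+\textbf{1}_{uv})$ is algebraically correct, and your disposal of the case $\textbf{1}_{uv}=1$ is sound (indeed the paper uses the same pigeonhole-on-$|N_D(\cdot)|$ observation in the proof of Lemma \ref{Large-equal-degree}): since $|B|=x$ while $d(b)=|N_D(b)|+2$ can take only $|D|+1=x-2c$ values, two vertices of $B$ share a degree and are joined by the path through $uv$. The degree bounds $d(a)\le n-c-1$ for $a\in A_u\cup A_v$ and $d(b)\le x-2c+1$ for $b\in B$ are also correct consequences of \eqref{ABuv}.

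However, there is a genuine gap at exactly the point you flag: the bound $d(w)\le\beta$ for $w\in D$ is false in general, and your proposed repair is only a speculation, not an argument. The definition of $\beta$ forbids two vertices of equal degree above $\beta$, but permits one vertex of each such degree; since a vertex of $D$ may be adjacent to everything except $u$ and $v$, the total excess $\sum_{w\in D}\max\{d(w)-\beta,0\}$ can a priori be as large as $1+2+\cdots+(n-c-1)=\binom{n-c}{2}$, which is of order $n^2$. Your slack term $y(n+3c+3-y)$ vanishes entirely when $y=0$ (i.e.\ $x=\beta$, the case $A_u=A_v=\emptyset$) and is at most roughly $n^2/4$ otherwise, so it cannot absorb this excess; the sketched path constraints $b\,w\,w'\,b'$ are not developed into a quantitative bound. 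This missing control is precisely what the paper's proof supplies by a different mechanism: it never bounds degrees in $D$ at all, but instead uses $|B|-|D|=2c+1\ge 3$ to find, among the vertices of $B$, two vertices $u_1,v_1$ with the \emph{maximum} common value $\gamma$ of $|N_D(\cdot)|$; the forbidden path between $u_1$ and $v_1$ then yields $\overline{e}(D)\ge\binom{\gamma}{2}$ via a second-level decomposition of $D$, and the maximality of $\gamma$ (at most one vertex of $B$ has exactly $k$ neighbors in $D$ for each $k>\gamma$) yields the bound \eqref{BD-complement} on $\overline{e}(B,D)$. Without an analogue of these two estimates, your degree-sum computation does not close.
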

\begin{proof}[\bf Proof]
Note that $|B|-|D|=2(\beta-n-\textbf{1}_{uv})+1=2c+1.$ If $c\ge1$, then \(|B|-|D|\geq3\). It follows that there exists \(\{u_1, u_2\} \subseteq B\) such that \(|N_{D}(u_1)| = |N_{D}(u_2)|\) by the pigeonhole principle, implying that \(d(u_1)=d(u_2)\) in view of \eqref{ABuv}. Let \(\gamma\) be the largest integer such that there are two vertices in  \(B\) that have exactly \(\gamma\) neighbors in \(D\). Without loss of generality, we may assume that \(u_1\) and \(v_1\) are such two vertices. Clearly, \(u_1\) and \(v_1\) are not adjacent as $e(B)=0$ by \eqref{ABuv}. Similarly, let $B_1 := N_D(u_1) \cap N_D(v_1)$,
$A_{u_1} := N_D(u_1)\setminus B_1$ and $A_{v_1} := N_D(v_1)\setminus B_1$. Write $|B_1|=y$. Then $|A_{u_1}|=|A_{v_1}|=\gamma-y$.
By the assumption, there does not exist a path of length three connecting $u_1$ and $v_1$. Thus, we have
\[
e(A_{u_1}, B_1) = e(A_{v_1}, B_1) = e(B_1) = e(A_{u_1}, A_{v_1}) = 0.
\]
This together with the fact $y\le\gamma$ implies that
\begin{align}\label{D-complement}
\overline{e}(D) &\geq \overline{e}(A_{u_1}, B_1) + \overline{e}(A_{v_1}, B_1) + \overline{e}(B_1) + \overline{e}(A_{u_1}, A_{v_1})\notag\\
&= y(\gamma-y)+y(\gamma-y)+\binom{y}{2}+(\gamma-y)(\gamma-y)\notag\\
&= \gamma^{2}-\frac{1}{2}y^{2}-\frac{1}{2}y\ge \binom{\gamma}{2}.
\end{align}
By the definition of \(\gamma\), we deduce that
\begin{align*}
    e(B,D)= \sum_{v \in B} |N_D(v)| &\leq  \gamma\left(|B|-(|D|-\gamma)\right) + \sum_{k = \gamma+1}^{|D|} k  \\
&=\gamma|B|+\frac12(|D|-\gamma)(|D|-\gamma+1).
\end{align*}
Note that $|B|=x$ and $|D|=x-2c-1$. Thus
\begin{align}\label{BD-complement}
\overline{e}(B,D) &=|B||D|-e(B, D)\notag\\
&\ge\frac12(|D|-\gamma)(2|B|-|D|+\gamma-1)\notag\\
&=\binom{x+1}{2}-\binom{\gamma}{2}-2c^2-c-(2c+1)\gamma-x\notag\\
&\ge\binom{x+1}{2}-\binom{\gamma}{2}+2c^2+3c+1-2(c+1)x,
\end{align}
where the last inequality holds due to $\gamma\le|D|=x-2c-1$.
Combining \eqref{Complement-G}, \eqref{D-complement} and \eqref{BD-complement}, we have
\begin{align}\label{e-Complement-G}
e(\overline{G})&\ge\overline{e}(V(G)\setminus D)+\overline{e}(\{u,v\},D)+\overline{e}(B,D)+\overline{e}(D)\notag\\
&\ge(\beta-1-\textbf{1}_{uv})^2+4n-2-\textbf{1}_{uv}+2c^2+3c+1-2(c+1)x\notag\\
&\ge n^{2}+c^2-c-\textbf{1}_{uv},
\end{align}
where the last inequality follows from the fact $x\le\beta-\textbf{1}_{uv}=n+c$. Thus, we complete the proof of this lemma.
\end{proof}

\begin{lem}\label{Large-equal-degree}
For any integer $n\ge2$, we have $\beta \leq n + 1$. If $\beta=n + 1$, then $G$ is isomorphic to \(K_{n,n + 1}\).
\end{lem}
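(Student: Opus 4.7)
My plan is to translate the hypothesis $e(G)\ge n^2+n$ into the non-edge bound $e(\overline{G})\le n^2$ and then combine Lemma \ref{c-Complement-G} with a short pigeonhole-plus-path argument. Since $\beta=n+c+\mathbf{1}_{uv}$, the assumption $\beta\ge n+2$ is equivalent to $c+\mathbf{1}_{uv}\ge 2$. If $c\ge 2$, Lemma \ref{c-Complement-G} immediately gives $e(\overline{G})\ge n^2+c^2-c-\mathbf{1}_{uv}\ge n^2+1$, contradicting $e(\overline{G})\le n^2$.

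The only remaining way to have $\beta\ge n+2$ is $c=1$ and $\mathbf{1}_{uv}=1$, so $\beta=n+2$ and $u\sim v$. I plug these into \eqref{e-Complement-G}, which combined with $e(\overline{G})\le n^2$ forces $x\ge n+1$; together with $x\le\beta-\mathbf{1}_{uv}=n+1$ this gives $x=n+1$, so $A_u=A_v=\emptyset$ and $|D|=n-2$. The edge pattern \eqref{ABuv} then implies $d(b)=2+|N_D(b)|\in\{2,3,\ldots,n\}$ for every $b\in B$, so the $n+1$ vertices of $B$ must collide in degree by pigeonhole. Since $u\sim v$, any colliding pair $b_i,b_j\in B$ is joined by the length-three path $b_i,u,v,b_j$, contradicting the hypothesis on $G$. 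This establishes $\beta\le n+1$.

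For $\beta=n+1$, I split on $\mathbf{1}_{uv}$. When $u\sim v$ (so $c=0$), the same path trick shows that any $b\in B$ with $d(b)=n+1$ would lie on the length-three path $u,b',v,b$ through an arbitrary $b'\in B\setminus\{b\}$, joining the equal-degree vertices $u$ and $b$; hence $d(b)\le n$ for every $b\in B$. If $x=n$, pigeonhole on $n$ vertices in $B$ with degrees in the $(n-1)$-element set $\{2,\ldots,n\}$ produces $b_i,b_j\in B$ with $d(b_i)=d(b_j)$ joined by $b_i,u,v,b_j$, contradiction. If $x\le n-1$, any two $B$-vertices of equal degree would yield the same path contradiction, so the $B$-degrees must be distinct, which pins $e(B,D)=\binom{x}{2}$; a routine upper bound on the remaining edge contributions then gives $e(G)\le n^2-n+2+x<n^2+n$, contradicting the edge hypothesis.

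In the last subcase $u\not\sim v$ (so $c=1$, $\mathbf{1}_{uv}=0$), Lemma \ref{c-Complement-G} yields $e(\overline{G})\ge n^2$, so every inequality in the chain leading to \eqref{e-Complement-G} must be tight. Reading off the equality conditions: equality in \eqref{e-Complement-G} forces $x=n+1$, hence $A_u=A_v=\emptyset$ and $|D|=n-2$; equality in \eqref{BD-complement} combined with $\gamma\le|D|$ forces $\gamma=|D|$ and $e(B,D)=|B||D|$, so $B$ is completely joined to $D$; and equality in \eqref{D-complement} forces $e(D)=0$. Together with \eqref{ABuv}, these conditions show that $G$ has bipartition $(\{u,v\}\cup D,\,B)$ with every cross-edge present, i.e., $G\cong K_{n,n+1}$. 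The main obstacle is the two borderline configurations with $\mathbf{1}_{uv}=1$, where Lemma \ref{c-Complement-G} alone cannot close the gap to $n^2$; the idea in both is to use \eqref{Complement-G} or \eqref{e-Complement-G} to force $x$ close to maximal, and then to exploit the canonical length-three path through $u$ and $v$ to reduce everything to a single degree-pigeonhole on $B$.
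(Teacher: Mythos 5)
Your proof is correct and follows essentially the same route as the paper: Lemma \ref{c-Complement-G} (together with the equality analysis of \eqref{BD-complement} and \eqref{e-Complement-G}) for the non-adjacent case, and the pigeonhole on $B$-degrees combined with the path $b_iuvb_j$ when $uv\in E(G)$. The only differences are organizational --- you force $x=n+1$ via \eqref{e-Complement-G} before pigeonholing where the paper pigeonholes directly from $|B|-|D|\ge3$, and your split into $x=n$ versus $x\le n-1$ absorbs the $n=x=2$ configuration that the paper treats as a separate terminal case.
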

\begin{proof}[\bf Proof]
Suppose that $\beta\ge n+1$. We aim to prove that  $G$ is isomorphic to \(K_{n,n + 1}\). Let $u$ and $v$ are two vertices in $G$ such that $d(u) = d(v) = \beta$. Since $\beta\ge n+1$,  we have $c=\beta-n-\textbf{1}_{uv}\ge1-\textbf{1}_{uv}$. If $\textbf{1}_{uv}=0$, then $e(\overline{G})>n^2$ by Lemma \ref{c-Complement-G} unless $c=1$, a contradiction. If $c=1$, then we have $e(\overline{G})=n^2$. Thus, we conclude that \(x=n + c=n+1\) and \(\gamma=x-2c-1 =n-2\) by combining \eqref{BD-complement} and \eqref{e-Complement-G}, which further implies that $e(B,D)=|B||D|$. Thus, \(G\) is isomorphic to \(K_{n,n + 1}\). If $\textbf{1}_{uv}=1$, then $|B|-|D|\ge3$ unless $c=0$. Thus, there exist two vertices \(u_1, v_1\in B\) such that \(|N_{D}(u_1)| = |N_{D}(v_1)|\) by the pigeonhole principle, implying that \(d(u_1)=d(v_1)\) in view of \eqref{ABuv}. Clearly, \(u_1uvv_1\) is a path of length three with equal-degree endpoints. This leads to a contradiction.

Now, it suffices to assume that $\textbf{1}_{uv}=1$ and $c=0$; moreover, the degrees of the vertices in \(B\) are pairwise distinct. Recall that $|B|-|D|=2c+1$. It follows that $x=|B|=|D|+1$. This means that there is an order $w_1,\ldots,w_x$ of the vertices in $B$ such that \(|N_D(w_i)|=i - 1\) for any $i\in[x]$. Thus
\[
\overline{e}(B,D)=|B||D|-\sum_{i=1}^x(i-1)=\binom{x}{2}.
\]
This together with \eqref{Complement-G} implies that
\begin{align*}
e(\overline{G})&\ge(\beta-1-\textbf{1}_{uv})^2+4n-\frac12(x^2+x)-2-\textbf{1}_{uv}+\binom{x}{2}\\
&=n^2+2n-x-2\geq n^2+n-2,
\end{align*}
where the last inequality holds in view of $x\le\beta-\textbf{1}_{uv}=n$. This leads to a contradiction unless \(n =x= 2\). If \(n =x= 2\), then we let \(B = \{u_1,v_1\}\) and $D=\{w_1\}$. Clearly, $\{d(u_1),d(v_1)\}=\{2,3\}$ and $d(w_1)=1$. Without loss of generality, we may assume that \(d(u_1)=2\) and \(d(v_1)=3\). This implies that \(v_1vu_1u\) is a path of length three with equal-degree endpoints, a contradiction. This completes the proof of this lemma.
\end{proof}

The following lemma is given by Chen and Ma \cite{CM2025} (see Lemma 5), which plays a crucial role in our proof. We mention that the condition $n\ge5$ can be obtained from their proof.
\begin{lem}[Chen and Ma \cite{CM2025}]\label{Small-equal-degree}
For $n\geq5$, we have either \(\beta \geq \Delta - 1\) or \(\Delta \leq n + 1\), where $\Delta$ is the maximum degree of the vertices in  $G$.
\end{lem}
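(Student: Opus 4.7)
The plan is a proof by contradiction: assume $\Delta \geq n + 2$ and $\beta \leq \Delta - 2$, and derive that $2e(G) < 2(n^2 + n)$. The second hypothesis forces the vertex $w$ of degree $\Delta$ to be unique, and at most one further vertex $z$ to have degree $\Delta - 1$. Set $A := N(w)$ and $B := V(G) \setminus (A \cup \{w\})$, so that $|A| = \Delta$ and $|B| = 2n - \Delta \leq n - 2$.

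The key structural step is the observation that equal-degree vertices in $A$ have tiny $A$-neighbourhoods. If $x, y \in A$ share a degree and $z \in (N(y) \cap A) \setminus \{x\}$, then $x - w - z - y$ is a forbidden length-$3$ path, so $N(y) \cap A \subseteq \{x\}$ (and symmetrically); if three vertices of $A$ share a degree, each one must have no $A$-neighbour at all. Combining with $w \in N(v)$ for every $v \in A$, this gives $d(v) \leq |B| + 2 = 2n - \Delta + 2$ for $v$ in an $A$-paired class and $d(v) \leq |B| + 1$ for $v$ in an $A$-multi class. A variant of the same path-through-$w$ argument applied to $v \in A$ and $v'' \in B$ with $d(v) = d(v'')$ shows $|N(v'') \cap A| \leq 1$, hence $d(v) = d(v'') \leq |B| = 2n - \Delta$. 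In particular, any $A$-unique vertex of degree exceeding $2n - \Delta$ must be $G$-unique; its degree lies in $\{\beta + 1, \ldots, \Delta - 1\}$, and there are at most $\Delta - 1 - \beta$ such vertices.

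The second step is an edge count. I would partition $A$ into four groups ($A$-multi, $A$-paired, $A$-unique-low with degree $\leq 2n - \Delta$, and $A$-unique-high with $G$-unique degree in $\{\beta + 1, \ldots, \Delta - 1\}$) and apply the bounds above, summing over the \emph{distinct} admissible degrees rather than using a uniform maximum. Combining with $d(w) = \Delta$ and the analogous bound $\sum_{v \in B} d(v) \leq |B|\beta + (\Delta - 1 - \beta)\cdot \mathbf{1}[z \in B]$, a short case analysis on where $z$ lies, together with the allowed range $\beta \in \{1, \ldots, \Delta - 2\}$, should then yield an explicit upper bound on $2 e(G)$ strictly below $2(n^2 + n)$.

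The main obstacle is sharpness. A naive bound treating every $A$-unique vertex as potentially attaining degree $\Delta - 1$ is too loose and matches the required threshold. The decisive refinement is that the $A$-unique-high vertices are all $G$-unique, so they both have distinct degrees and are few in number, contributing only $O(\Delta - \beta)$ beyond the multi/paired bound. It is exactly this extra slack that should drive the inequality strictly, and the hypothesis $n \geq 5$ presumably arises as the threshold at which this slack suffices to cover the small corrections; for smaller $n$ additional direct arguments on the boundary cases would be required.
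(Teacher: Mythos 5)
Your structural observations are sound and in fact reproduce the engine of the paper's argument (the paper cites Chen--Ma for this lemma and carries out the analogous proof for the even case in the Appendix): a neighbour $v$ of the maximum-degree vertex $w$ with two neighbours inside $N(w)$ cannot share its degree with any other vertex of $N(w)$, because of the path through $w$; this is exactly Lemma \ref{d(u)=d(v)}. However, your bookkeeping contains a genuine error. You classify the ``$A$-unique-high'' vertices as having degree in $\{\beta+1,\dots,\Delta-1\}$ and conclude there are at most $\Delta-1-\beta$ of them. Being $G$-unique does not force a degree above $\beta$: the parameter $\beta$ is the largest \emph{repeated} degree, so a vertex can have a globally unique degree anywhere in $(2n-\Delta,\beta]$, and whenever $\beta>2n-\Delta$ (the typical regime, e.g.\ $\Delta=n+2$ and $\beta$ close to $n$) your four classes do not cover $A$ and your count is an underestimate. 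The correct bound is that these vertices have pairwise distinct degrees in the interval $(2n-\Delta,\Delta-1]$, hence there are at most $2\Delta-2n-1$ of them and their degree sum can be as large as $\sum_{k=2n-\Delta+1}^{\Delta-1}k$ --- substantially more than you allow. The same over-optimism infects your bound $\sum_{v\in B}d(v)\le |B|\beta+(\Delta-1-\beta)\cdot\mathbf{1}[z\in B]$: several vertices of $\overline{N}(w)$ may have distinct degrees strictly between $\beta$ and $\Delta-1$.

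The second gap is that the decisive step --- showing the corrected degree-sum bound falls strictly below $2(n^2+n)$ --- is exactly where all the work lies, and you have not carried it out. As the Appendix proof of the even analogue (Lemma \ref{beta n+2}) shows, once the counts are done correctly one must optimize the admissible degree sequence over three cases (according to how $|B|$ and $\Delta+|B|-2n$ compare with $\beta$), and the resulting quadratic inequalities are tight: the slack in Case 1 is $\frac{n-5}{2n-1-\Delta}$ and elsewhere one needs discriminants such as $41-8n$ to be negative, which is precisely where the hypothesis $n\ge 5$ (resp.\ $n\ge 6$) enters. With your (incorrect) count of at most $\Delta-1-\beta$ high-degree vertices the inequality would look comfortable, but with the true count it is borderline, so the ``short case analysis'' you defer cannot be waved through; it is the heart of the proof.
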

Now, we prove Theorem \ref{LZ} for \(n\geq5\).
\begin{proof}[\bf Proof of Theorem \ref{LZ} for \(n\geq5\)]Since \(e(G)\geq n^2 + n\), we have \(\Delta\geq n + 1\). This together with Lemmas \ref{Large-equal-degree} and \ref{Small-equal-degree} implies that $(\mathrm{i})$ $\beta=n+1$ and $G$ is isomorphic to \(K_{n,n + 1}\), or $(\mathrm{ii})$ $\beta\le n$ and \(\Delta=n + 1\). However, the second case implies that $e(G)\le(2n\beta+\Delta)/2<n^2+n$, a contradiction. Therefore, we conclude that \(G\) must be the complete bipartite graph \(K_{n,n + 1}\).
\end{proof}


In what follows, we prove Theorem \ref{LZ} for \(n\in\{2,3,4\}\). By Lemma \ref{Large-equal-degree}, it suffices to prove Theorem \ref{LZ} for $\beta\le n$ by obtaining a contradiction. We first give an easy lemma.

\begin{lem}\label{2n+1 beta 3}
 For any integer $n\geq2$, we have $\beta\ge3$.
\end{lem}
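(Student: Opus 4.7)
The plan is to suppose for contradiction that $\beta\le 2$ and derive a contradiction through a compact case analysis. Since no two vertices of degree at least $3$ can share a degree under $\beta\le 2$, at most $2n-2$ vertices of $G$ can have degree $\geq 3$ (there being only $2n-2$ possible values in $\{3,\dots,2n\}$), and hence at least three vertices of $G$ have degree at most $2$. I would then maximize $\sum_v d(v)$ under this constraint: if $s$ denotes the number of high-degree vertices, the best scenario assigns them the top $s$ values from $\{3,\dots,2n\}$ and gives the remaining $2n+1-s$ vertices degree $2$. Combining the resulting upper bound with the hypothesis $e(G)\ge n^2+n$ yields a quadratic inequality in $s$ whose discriminant reduces to $25-8n$; for $n\ge 4$ this is already negative, so the inequality has no solution, and therefore $\beta\le 2$ is impossible in that range.

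For $n=3$ the same inequality pins down $s=4$ and forces equality throughout, so the degree sequence of $G$ must be exactly $(6,5,4,3,2,2,2)$. In particular the unique vertex $v_6$ of degree $6$ is adjacent to everything, so for any two distinct non-$v_6$ vertices $x,y$ of degree $\ge 2$ a path $x\,x'\,v_6\,y$ of length three exists unless the one non-$v_6$ neighbor of $x$ happens to be $y$. Applying this observation to the three degree-$2$ vertices would force every pair among them to be an edge; but then each of them would have degree at least $3$ (two neighbors in the triangle plus $v_6$), contradicting the degree sequence.

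For $n=2$ the quadratic admits only $s\in\{1,2\}$, and a short check together with parity of the degree sum reduces these to the two feasible degree sequences $(4,2,2,2,2)$ and $(4,3,2,2,1)$. In both cases the unique vertex of degree $4$ is universal, and the remaining edges are essentially forced (in the first a perfect matching on the four degree-$2$ vertices; in the second $v_3$ must be joined to the two degree-$2$ vertices since the degree-$1$ vertex is saturated). In each case one quickly exhibits two vertices of degree $2$ joined by a path of length three, finishing the argument. The main obstacle is not any single step but rather keeping the small-$n$ case analysis short; the systematic exploitation of the universal high-degree vertex that appears in every extremal configuration is what makes this feasible, without which hand-enumeration would be unpleasant.
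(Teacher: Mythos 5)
Your proposal is correct, and it reaches the conclusion by a somewhat different endgame than the paper. Both arguments open the same way: under $\beta\le 2$ the degrees in $\{3,\dots,2n\}$ are pairwise distinct, which caps the degree sum. But the paper aims its counting at a single structural fact — that vertices $z_1,z_2$ of degrees $2n$ and $2n-1$ must both be present (else $2e(G)\le 2n^2-n+6<2(n^2+n)$ for $n\ge3$) — and then finishes in one line: with $w_1,w_2$ the two degree-$\beta$ vertices, one of $w_1z_1z_2w_2$ and $w_1z_2z_1w_2$ is a path of length three, since $z_1$ is universal and $z_2$ misses only one vertex. Your version instead optimizes over the number $s$ of high-degree vertices and uses the discriminant $25-8n$ to kill $n\ge4$ outright, then pins down the extremal degree sequences $(6,5,4,3,2,2,2)$, $(4,2,2,2,2)$ and $(4,3,2,2,1)$ for $n\in\{2,3\}$ and exhibits paths through the forced universal vertex. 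The trade-off is clear: the paper's route needs only the single sporadic case $n=2$ and avoids enumerating degree sequences, while yours is self-contained at the level of a pure optimization plus two small verifications; your $n=3$ and $n=2$ analyses are each correct (the matching structure on the degree-$2$ vertices in $(4,2,2,2,2)$, and the forced adjacencies of $v_3$ in $(4,3,2,2,1)$, are argued soundly), so there is no gap, just a slightly longer small-case tail.
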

\begin{proof}[\bf Proof]
Suppose that $\beta\leq2$. We claim that there exist two vertices $z_1,z_2$ with degrees $2n$ and $2n-1$, respectively. Otherwise, by the definition of $\beta$
\[
2e(G)\le 2+2+2+2+\left(\sum_{k=3}^{2n}k-(2n-1)\right)=2n^2-n+6.
\]
If \(n\geq3\), then this contradicts the fact that \(e(G)\geq n^{2}+n\). Therefore, we have \(n = \beta=2\), and the degree sequence is $(4,2,2,2,2)$. Clearly, there is a path of length three with equal-degree endpoints, a contradiction.
Since there exist two vertices $w_1,w_2$ with degrees exactly $\beta\le2$ by the definition of $\beta$, then at least one of $w_1z_1z_2w_2$ and $w_1z_2z_1w_2$ is a path of length three with equal-degree endpoints, a contradiction.
\end{proof}

We also need the following lemma obtained by Chen and Ma \cite{CM2025} (see Lemma 4).
\begin{lem}[Chen and Ma \cite{CM2025}]\label{d(u)=d(v)}
Let $G$ be a graph and $v \in V(G)$. If there exists a vertex $u \in N(v)$ with $|N(u)\cap N(v)|\ge2$, then $d(w)\neq d(u)$ for each $w \in N(v) \setminus \{u\}$.
\end{lem}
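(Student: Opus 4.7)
The strategy is a direct proof by contradiction: assume some $w\in N(v)\setminus\{u\}$ satisfies $d(u)=d(w)$, and then exhibit an explicit path of length three joining $u$ and $w$. Such a path would contradict the standing assumption of Section~2, namely that $G$ contains no two vertices of equal degree joined by a path of length three.

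The natural candidate path is built through a common neighbor of $u$ and $v$: pick any $b\in B:=N(u)\cap N(v)$ and form the walk $u\,b\,v\,w$. The three required edges $ub$, $bv$, $vw$ are automatic from the definitions of $B$ and of $w\in N(v)$. The only nontrivial point is that the four listed vertices be pairwise distinct; the delicate inequality is $b\ne w$, which can fail precisely when $w$ itself lies in $B$. This is exactly where the hypothesis $|B|\ge 2$ (as opposed to $|B|\ge 1$) is used: even in the case $w\in B$ we may still choose $b\in B\setminus\{w\}$. The remaining inequalities $u\ne b,v,w$ and $v\ne b,w$ are immediate: $u\ne b$ and $u\ne v$ since $b,v\in N(u)$, $b\ne v$ since $b\in N(v)$, $v\ne w$ since $w\in N(v)$, and $u\ne w$ by the standing assumption on $w$.

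Thus $ubvw$ is a genuine path of length three whose endpoints $u$ and $w$ have equal degree, contradicting the standing hypothesis on $G$ and completing the proof. The main (and essentially only) obstacle is the bookkeeping in the case $w\in B$, which is precisely why the hypothesis asks for $|N(u)\cap N(v)|\ge 2$ rather than merely $\ge 1$.
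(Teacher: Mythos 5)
Your proof is correct and is the natural argument: choosing a common neighbor $b\in N(u)\cap N(v)$ with $b\neq w$ (which is where $|N(u)\cap N(v)|\ge 2$ is needed) yields the path $ubvw$ contradicting the standing no-equal-degree-$P_3$ assumption. The paper itself imports this lemma from Chen and Ma without reproving it, and your argument matches the intended one-line proof, correctly noting that the statement is to be read under the section's standing hypothesis on $G$.
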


\begin{lem}\label{2n+1 beta 2n-2}
For \(n\in\{2,3,4\}\), we have $\Delta\leq 2n-2$.
\end{lem}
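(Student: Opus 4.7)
The plan is to argue by contradiction, assuming $\Delta \ge 2n-1$. For $n = 2$ this is immediate, since we are working under $\beta \le n = 2$ while Lemma \ref{2n+1 beta 3} gives $\beta \ge 3$.

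For $n \in \{3,4\}$, fix a vertex $v$ with $d(v) = \Delta$, and set $N = N(v)$, $T = V(G) \setminus (N \cup \{v\})$, so that $|T| = 2n - \Delta \in \{0,1\}$. The sub-case $\Delta = 2n$ (hence $T = \emptyset$) is disposed of at once: for every $u \in N = V(G) \setminus \{v\}$ we have $|N(u) \cap N| = d(u) - 1$, so any $u$ with $d(u) \ge 3$ falls under Lemma \ref{d(u)=d(v)}, making $d(u)$ unique in $N$. Since $d(v) = 2n > n \ge \beta \ge 3$ and the degree $\beta$ is attained by at least two vertices of $G$, both such vertices lie in $N$, contradicting uniqueness.

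It remains to rule out $\Delta = 2n-1$. Write $T = \{w\}$ and $a = d(w)$; then the $a$ neighbors of $w$ are precisely the vertices of $N$ adjacent to $w$. Double counting yields
\[
\sum_{u \in N} d(u) \;=\; 2 e(G) - |N| - a \;\ge\; 2n^2 + 1 - a.
\]
On the other hand, Lemma \ref{d(u)=d(v)}, applied to any $u \in N$ with $|N(u) \cap N| \ge 2$, forces $d(u)$ to be unique in $N$; the hypothesis is satisfied by every $u$ with $d(u) \ge 4$, and by every $u$ with $d(u) = 3$ that is not adjacent to $w$. Combined with $\beta \le n$, this pins down the multiplicities inside $N$: degrees at least $5$ occur at most once in $G$; degree $4$ occurs at most once in $N$, and at most once in $G$ when $\beta = 3$; and degree $3$ can repeat in $N$ only among the $a$ vertices adjacent to $w$. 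These restrictions yield an explicit upper bound $\Phi(a,\beta)$ on $\sum_{u \in N} d(u)$, and a finite check over admissible pairs $(a,\beta) \in \{0,\ldots,2n-1\} \times \{3,\ldots,n\}$ verifies $\Phi(a,\beta) < 2n^2 + 1 - a$ in every case, the desired contradiction.

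The principal obstacle is the bookkeeping in the case $n = 4$: since $\beta$ may be $3$ or $4$, $a$ ranges over $\{0,\ldots,7\}$, and one must track whether each degree-$3$ or degree-$4$ vertex of $N$ is adjacent to $w$ (which governs whether Lemma \ref{d(u)=d(v)} applies), several sub-cases must be verified individually. When $\beta = 3$, the value $a = 4$ is excluded immediately, as this would force two vertices of degree $4$ in $G$; the remaining sub-cases are handled uniformly by the degree-sum comparison above. The $n = 3$ analysis is tighter because Lemma \ref{2n+1 beta 3} together with $\beta \le 3$ pins $\beta$ to $3$.
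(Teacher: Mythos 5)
Your handling of $n=2$ and of the sub-case $\Delta=2n$ coincides with the paper's: in both, every $u\in N(v_0)$ of degree at least $3$ has two neighbours inside $N(v_0)$, so Lemma \ref{d(u)=d(v)} makes its degree unique there, while $\beta\ge3$ (Lemma \ref{2n+1 beta 3}) supplies two equal-degree vertices that must both lie in $N(v_0)$. For $\Delta=2n-1$ you diverge. The paper first disposes of the possibility $d(w)=\beta$ by exhibiting an explicit path $v_1v_3v_0v_2$, and then proves the clean structural claim that no degree $\ge3$ can be attained by \emph{three} vertices (via a path through $v_0$); this caps the degree sequence at once and gives a two-line sum comparison for $n=3,4$. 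You instead parametrise by $a=d(w)$ and run a degree-sum bound over $N(v_0)$ using only Lemma \ref{d(u)=d(v)} (uniqueness within $N(v_0)$ for degrees $\ge4$, and for degree $3$ when the vertex misses $w$) together with the definition of $\beta$. I checked that your finite verification does close in every admissible case, but only because the multiplicity bounds ``at most once in $G$'' are applied to $w$ and $v_0$ as well: e.g.\ $a=\Delta$ is inadmissible since it would force $\beta=\Delta>n$, and for $n=4$, $a=6$ one must note that no vertex of $N(v_0)$ can then have degree $6$; if one only restricted degrees \emph{inside} $N(v_0)$, the cases with large $a$ would not yield a contradiction. Two small blemishes: you never actually write down $\Phi(a,\beta)$ or perform the check, which is where all the delicacy lives; and the assertion that $\beta=3$, $a=4$ ``forces two vertices of degree $4$'' is not immediate ($d(w)=4$ alone gives only one such vertex) --- that sub-case should simply go through the same sum comparison. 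On balance your route is correct but more bookkeeping-heavy; the paper's observation that three equal degrees $\ge3$ are impossible when $\Delta=2n-1$ is the device that makes its version short, and you could import it to collapse most of your case analysis.
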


\begin{proof}[\bf Proof]
Suppose  that $\Delta\ge2n-1$. It follows from Lemma \ref{Large-equal-degree} that there exists a unique vertex $v_0$ in $G$ with $d(v_0)=\Delta$.
If \(\Delta=2n\), then there exist two vertices $v_1$ and $v_2$ in $N(v_0)$ with \(d(v_1)=d(v_2)=\beta\geq3 \) by Lemma \ref{2n+1 beta 3}. This implies that \(|N(v_0)\cap N(v_i)| \geq 2\) for each \(i\in\{1,2\}\) , which contradicts Lemma \ref{d(u)=d(v)}.
If \(\Delta = 2n - 1\), then there exists a unique vertex \(v_1\) such that \(v_1 \notin N(v_0)\cup\{v_0\}\). If \(d(v_1) = \beta\), then \(|N(v_0) \cap N(v_1)| = d(v_1) \geq 3\)  by Lemma \ref{2n+1 beta 3}. Suppose that \(v_2\) and \(v_3\) are two distinct vertices such that \(d(v_2) = \beta\) and \(v_3 \in N(v_0) \cap N(v_1)\). Then \(v_1v_3v_0v_2\) is a path of length three, and \(d(v_1) = d(v_2)\), a contradiction.
If \(d(v_1) \neq \beta\), then
\begin{align}\label{No-Three}
\text{there cannot exist three distinct vertices in \(G\) with the same degree at least \(3\)}. \tag{*}
\end{align}
 Otherwise, suppose that there exist three vertices \(v_2, v_3, v_4\) such that \(d(v_2) = d(v_3) = d(v_4) \geq 3\). Then \(N(v_i) \cap N(v_0) \neq \emptyset\)  for each \(i \in \{2, 3, 4\}\). Without loss of generality, suppose that there exists a vertex \(x \in N(v_2) \cap N(v_0)\). Then at least one of the paths \(v_2xv_0v_3\) and \(v_2xv_0v_4\) is a path of length three with equal-degree endpoints, a contradiction. Recall that $\beta\le n$ by Lemma \ref{Large-equal-degree}. This together with \eqref{No-Three} deduces that \((\mathrm{i})\) if \( n = 4 \), then \( 40 = 2(n^2 +n) \leq 2e(G) \leq 7 + 6  + 5 + 4 + 4 + 3 + 3 + 2 + 2 = 36 \), a contradiction; \((\mathrm{ii})\) if \( n = 3\), then \( 24 = 2(n^2 +n) \leq 2e(G) \leq 5  + 4 + 3 + 3 + 2 + 2 +2= 21 \), also a contradiction; and \((\mathrm{iii})\) if \( n = 2 \), then \(\beta \leq 2\)  by Lemma \ref{Large-equal-degree}, and this leads to a contradiction with Lemma \ref{2n+1 beta 3}. Thus, we complete the proof of this lemma.
\end{proof}

Now, we are in a position to give the proof of Theorem \ref{LZ} for \(n\in\{2,3,4\}\).
\begin{proof}[\bf Proof of Theorem \ref{LZ} for \(n\in\{2,3,4\}\)]
In view of Lemma \ref{2n+1 beta 2n-2}, we conclude that either \((\mathrm{i})\) $\Delta\le n+1$ for \(n\in\{2,3,4\}\), or \((\mathrm{ii})\) \(n = 4\) and \(\Delta = 6\). Recall that $\beta\le n$ by Lemma \ref{Large-equal-degree}. For the first case, we have
 \[
2(n^2 + n) \leq 2e(G) \leq 2n\cdot n+\Delta\le2n^2+n+1,
\]
a contradiction. For the second case, we have
\[
40 = 2(n^2 + n) \leq 2e(G) \leq 4(2n - 1) + 5 + 6 = 39,
\]
a contradiction. Therefore, we conclude that \(G\) must be the complete bipartite graph \(K_{n,n + 1}\).
\end{proof}
Finally, we complete the proof of Theorem \ref{LZ}.

\section{Proof of Theorem \ref{2n}}

Throughout this section, let $n\geq3$, and let $G$ be a graph on $2n$ vertices with at least $n^2 -1$ edges, such that $G$ contains no two vertices of the same degree joined by a path of length three. Our goal is to prove that $G$ must be the complete bipartite graph $K_{n+1,n-1}$. The proof is similar to that of Theorem \ref{LZ}.

Let $u$ and $v$ be two vertices in $G$ such that $d(u) = d(v) = \beta$, and let $\textbf{1}_{uv}=1$ if $uv\in E(G)$ and $\textbf{1}_{uv}=0$ if $uv\notin E(G)$. Set $B := N(u) \cap N(v)$,
$A_u := N(u)\setminus(\{v\} \cup B)$, $ A_v := N(v)\setminus(\{u\} \cup B)$ and $D :=V(G)\setminus(N(u)\cup N(v)\cup\{u,v\})$.
Let $|B|=x$. Then,
\(|A_u| =|A_v|=\beta-x-\textbf{1}_{uv}\) and \(|D| = (2n-2)-|B| -|A_u|-|A_v|=x-2(\beta-n-\textbf{1}_{uv})-2\).
Since $G$ contains no path of length three connecting $u$ and $v$, we have
\begin{equation}\label{ABuv'}
e(A_u, B) = e(A_v, B) = e(B) = e(A_u, A_v) = 0.
\end{equation}
It follows that
\begin{align*}
\overline{e}(V(G)\setminus D)&=\overline{e}(u,A_v)+ \overline{e}(v,A_u)+\overline{e}(A_u,A_v)+ \overline{e}(B,A_u)+ \overline{e}(B,A_v)+\overline{e}(B)+\overline{e}(u,v)\\
&=2(\beta-x-\textbf{1}_{uv})+(\beta-x-\textbf{1}_{uv})^2+2x(\beta-x-\textbf{1}_{uv})+\binom{x}{2}+(1-\textbf{1}_{uv})\\
&=(\beta+1-\textbf{1}_{uv})^2-\frac12(x^2+5x)-\textbf{1}_{uv}.
\end{align*}
This together with the fact $\overline{e}(\{u,v\},D)=2|D|$ implies that
\begin{align}\label{Complement-G'}
e(\overline{G})&\ge\overline{e}(V(G)\setminus D)+\overline{e}(\{u,v\},D)+\overline{e}(B,D)+\overline{e}(D)\notag\\
&=(\beta+1-\textbf{1}_{uv})^2-\frac12(x^2+5x)-\textbf{1}_{uv}+2(2n+x-2\beta+2\cdot\textbf{1}_{uv}-2)+\overline{e}(B,D)+\overline{e}(D)\notag\\
&=(\beta-1-\textbf{1}_{uv})^2+4n-\frac12(x^2+x)-4-\textbf{1}_{uv}+\overline{e}(B,D)+\overline{e}(D).
\end{align}
Let $c:=\beta-n-\textbf{1}_{uv}$. Note that
\[
|B|-|D|=2(\beta-n-\textbf{1}_{uv})+1=2c+2.
\]
If $c\ge1$, then \(|B|-|D|\geq4\). It follows that there exists \(\{u_1, u_2\} \subseteq B\) such that \(|N_{D}(u_1)| = |N_{D}(u_2)|\) by the pigeonhole principle, implying that \(d(u_1)=d(u_2)\). Let \(\gamma\) be the largest integer such that there are two vertices in  \(B\) that have \(\gamma\) neighbors in \(D\). Without loss of generality, we may assume that \(u_1\) and \(v_1\) are such two vertices. By \eqref{ABuv'}, \(u_1\) and \(v_1\) are not adjacent. Similarly, let $B_1 := N_D(u_1) \cap N_D(v_1)$,
$A_{u_1} := N_D(u_1)\setminus B_1$ and $A_{v_1} := N_D(v_1)\setminus B_1$. Write $|B_1|=y$. Then $|A_{u_1}|=|A_{v_1}|=\gamma-y$.
By the assumption, there does not exist a path of length three connecting $u_1$ and $v_1$. Thus, we have
\[
e(A_{u_1}, B_1) = e(A_{v_1}, B_1) = e(B_1) = e(A_{u_1}, A_{v_1}) = 0.
\]
This together with the fact $y\le\gamma$ implies that
\begin{align*}
\overline{e}(D) &\geq \overline{e}(A_{u_1}, B_1) + \overline{e}(A_{v_1}, B_1) + \overline{e}(B_1) + \overline{e}(A_{u_1}, A_{v_1})\\
&= y(\gamma-y)+y(\gamma-y)+\binom{y}{2}+(\gamma-y)(\gamma-y)\\
&= \gamma^{2}-\frac{1}{2}y^{2}-\frac{1}{2}y\ge \binom{\gamma}{2}.
\end{align*}
By the definition of \(\gamma\), we deduce that
\begin{align*}
e(B,D)= \sum_{v \in B} |N_D(v)| &\leq  \gamma\left(|B|-(|D|-\gamma)\right) + \sum_{k = \gamma+1}^{|D|} k  \\
&=\gamma|B|+\frac12(|D|-\gamma)(|D|-\gamma+1).
\end{align*}
Note that $|B|=x$ and $|D|=x-2c-2$. Thus
\begin{align*}
\overline{e}(B,D) &=|B||D|-e(B, D)\\
&\ge\frac12(|D|-\gamma)(2|B|-|D|+\gamma-1)\\
&=\binom{x+1}{2}-\binom{\gamma}{2}-2c^2-3c-(2c+2)\gamma-x-1\\
&\ge\binom{x+1}{2}-\binom{\gamma}{2}+2c^2+5c+3-(2c+3)x,
\end{align*}
where the last inequality follows from the fact that $\gamma\le|D|=x-2c-2$.
In view of \eqref{Complement-G'}, we have
\begin{align}\label{c-Complement-G'}
e(\overline{G})&\ge\overline{e}(V(G)\setminus D)+\overline{e}(\{u,v\},D)+\overline{e}(B,D)+\overline{e}(D)\notag\\
&\ge(\beta-1-\textbf{1}_{uv})^2+4n-4-\textbf{1}_{uv}+2c^2+5c+3-(2c+3)x\notag\\
&\ge (n^{2}-n+1)+c^2-1-\textbf{1}_{uv},
\end{align}
where the last inequality holds in view of $x\le\beta-\textbf{1}_{uv}=n+c$.

\begin{lem}\label{beta=n}
    For any integer $n\ge3$, we have $\beta \leq n + 1$. In particular, $G$ is isomorphic to \(K_{n+1,n - 1}\) if $\beta=n + 1$.
\end{lem}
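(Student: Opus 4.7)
The plan is to mirror Lemma~\ref{Large-equal-degree} from the odd case, exploiting the inequality~\eqref{c-Complement-G'} just derived together with the complement bound $e(\overline{G}) \le \binom{2n}{2} - (n^2 - 1) = n^2 - n + 1$. I will suppose for contradiction that $\beta \ge n + 1$, fix two vertices $u, v$ of degree $\beta$, and split on whether $uv \in E(G)$.

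If $\textbf{1}_{uv} = 1$, then $c = \beta - n - 1 \ge 0$, so $|B| - |D| = 2c + 2 \ge 2$. Pigeonhole yields distinct $u_1, v_1 \in B$ with $|N_D(u_1)| = |N_D(v_1)|$, and by~\eqref{ABuv'} every vertex of $B$ has all its neighbors confined to $\{u, v\} \cup D$, so $d(u_1) = 2 + |N_D(u_1)| = 2 + |N_D(v_1)| = d(v_1)$. But then $u_1 u v v_1$ is a path of length three with equal-degree endpoints, contradicting the hypothesis. Note that this step is cleaner than its odd-case analogue, because here $|B| - |D| \ge 2$ already outruns the $|D|+1$ possible values of $|N_D(\cdot)|$ even when $c = 0$. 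Hence $\textbf{1}_{uv} = 0$ and $c \ge 1$.

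Now~\eqref{c-Complement-G'} gives $e(\overline{G}) \ge n^2 - n + c^2$, which combined with the upper bound forces $c = 1$, i.e., $\beta = n + 1$, with equality throughout the chain leading to~\eqref{c-Complement-G'}. I then read off the structure of $G$ by tracking each slack: tightness in $x \le n + c$ gives $|B| = n + 1$; tightness in $\gamma \le |D|$ gives $\gamma = |D| = n - 3$; tightness in $\overline{e}(D) \ge \binom{\gamma}{2}$ gives $y = \gamma$ and $e(D) = 0$; tightness in the upper bound on $e(B, D)$, combined with $\gamma = |D|$, forces every vertex of $B$ to be adjacent to every vertex of $D$. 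Since $|A_u| = |A_v| = \beta - |B| - \textbf{1}_{uv} = 0$, neither $u$ nor $v$ has a neighbor outside $B$, so combined with $e(B) = 0$ from~\eqref{ABuv'} this identifies $G$ as the complete bipartite graph with parts $B$ of size $n + 1$ and $\{u, v\} \cup D$ of size $n - 1$, i.e., $G \cong K_{n+1, n-1}$.

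The main obstacle is the equality bookkeeping: I must verify that every slack in the derivation of~\eqref{c-Complement-G'} really closes into a structural constraint, taking care that the bounds on $e(B,D)$ and $\overline{e}(D)$ become degenerate precisely when $\gamma = |D|$ and $y = \gamma$, and confirming in particular that no non-edge has been double-counted or omitted in the decomposition of $e(\overline{G})$. The boundary case $n = 3$ (where $|D| = 0$ and the auxiliary vertices $u_1, v_1$ are vacuous) should be inspected directly but introduces no real difficulty, since the derived equalities remain consistent and force $G \cong K_{4, 2}$.
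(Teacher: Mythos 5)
Your proposal is correct and follows essentially the same route as the paper's proof: pigeonhole on $|N_D(\cdot)|$ to kill the case $uv\in E(G)$, then the bound \eqref{c-Complement-G'} against $e(\overline{G})\le n^2-n+1$ to force $c=1$, and equality tracking to recover $K_{n+1,n-1}$. Your equality bookkeeping is in fact slightly more careful than the paper's (which asserts $\gamma=x-2c-2=n-2$ where the correct value is $|D|=n-3$, as you have it).
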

\begin{proof}
Suppose that $\beta\ge n+1$. We aim to prove that  $G$ is isomorphic to \(K_{n+1,n - 1}\). Let $u$ and $v$ are two vertices in $G$ such that $d(u) = d(v) = \beta$. Since $\beta\ge n+1$,  we have $c=\beta-n-\textbf{1}_{uv}\ge1-\textbf{1}_{uv}$. If $\textbf{1}_{uv}=0$, then $e(\overline{G})>n^2-n+1$ by \eqref{c-Complement-G'} unless $c=1$, a contradiction. If $c=1$, then we conclude that \(x=n + c=n+1\) and \(\gamma=x-2c-2 =n-2\), implying $e(B,D)=|B||D|$. Thus, \(G\) is isomorphic to \(K_{n+1,n -1}\). If $\textbf{1}_{uv}=1$, then $|B|-|D|\ge2$. Thus, there exist \(u_1, v_1\in B\) such that \(d(u_1)=d(v_1)\) by the pigeonhole principle, and \(u_1uvv_1\) is a path of length \(3\). This leads to a contradiction.
\end{proof}

\begin{lem}\label{beta=3}
   For any integer $n\geq3$, we have $3\leq \beta\leq \Delta$.
\end{lem}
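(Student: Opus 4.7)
The plan is to imitate the proof of Lemma \ref{2n+1 beta 3}, adapted to $2n$ vertices with $e(G)\ge n^2-1$ edges. The inequality $\beta\le\Delta$ is immediate from the definitions, so the content lies in proving $\beta\ge3$. I would argue by contradiction: assume $\beta\le2$ and let $w_1,w_2$ be the two vertices of degree $\beta$ provided by the definition of $\beta$.

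The first step is to show that $G$ must contain two vertices $z_1,z_2$ with $d(z_1)=2n-1$ and $d(z_2)=2n-2$. Since $\beta\le2$, each degree in $\{3,4,\dots,2n-1\}$ is attained by at most one vertex of $G$. If some $d^*\in\{2n-1,2n-2\}$ is missing from the degree sequence, then at least $4$ vertices of $G$ have degree at most $2$, and maximizing the degree sum subject to this constraint (the worst case being $d^*=2n-2$) gives
\[
2e(G)\le 8+\sum_{k=3}^{2n-1}k-(2n-2)=2n^2-3n+7.
\]
Together with $2e(G)\ge 2(n^2-1)$ this forces $3n\le 9$, hence $n\le3$; so for $n\ge4$ both $z_1$ and $z_2$ must exist.

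Once $z_1,z_2$ are in place, the path is easy to construct. The vertex $z_1$ is adjacent to every other vertex, and $z_2$ has a unique non-neighbor $y'\ne z_1$. Since $\beta\le2<2n-2$, both $w_1,w_2$ are distinct from $z_1,z_2$, and at most one of $w_1,w_2$ equals $y'$. Hence there is an index $i\in\{1,2\}$ with $w_i\sim z_2$, and $w_i z_2 z_1 w_{3-i}$ is a path of length three with equal-degree endpoints, contradicting the standing hypothesis on $G$.

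The main obstacle will be the boundary case $n=3$, where the sum estimate becomes tight. Here $d^*=2n-1=5$ already yields $2e(G)\le 15<16$, an immediate contradiction, so only $d^*=2n-2=4$ remains, and the upper bound is exactly $16=2(n^2-1)$. Equality forces the unique degree sequence $(5,3,2,2,2,2)$: the degree-$5$ vertex $z_1$ is adjacent to all others, and the degree-$3$ vertex $y$ must contribute exactly two of the three remaining edges, both going to degree-$2$ vertices $a_1,a_2$. Then $a_1 z_1 y a_2$ is a length-three path between two degree-$2$ vertices, giving the contradiction. This concrete structural check in the tight small case is what replaces the routine degree counting.
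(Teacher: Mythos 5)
Your proof is correct and follows essentially the same route as the paper's: the same degree-sum bound $2e(G)\le 8+\sum_{k=3}^{2n-1}k-(2n-2)=2n^2-3n+7$ when one of the degrees $2n-1$, $2n-2$ is missing, and the same path $w_i z_2 z_1 w_{3-i}$ once both are present. The only difference is that you work out the tight case $n=3$ explicitly (forcing the degree sequence $(5,3,2,2,2,2)$ and exhibiting the path $a_1 z_1 y a_2$), a detail the paper dismisses with "the graph $G$ can be characterized."
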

\begin{proof}
    If $\beta\leq2$, then there exist two vertices $z_1,z_2$ with degrees $2n-1$ and $2n-2$, respectively. Otherwise, by the definition of $\beta$
\[
2e(G)\le (2n-(2n-4))2+\left(\sum_{k=3}^{2n-1}k-(2n-2)\right)=2n^2-3n+7.
\]
If \(n\geq4\), then this contradicts the fact that \(e(G)\geq n^{2}-1\). Therefore, \(n = 3\), and the equality holds. At this time, the graph \(G\) can be characterized and there exists a path of length three with equal-degree endpoints, a contradiction. Since there exist two vertices $w_1,w_2$ with degrees exactly $\beta$ by the definition of $\beta$, then at least one of $w_1z_1z_2w_2$ and $w_1z_2z_1w_2$ is a path of length three with equal-degree endpoints, a contradiction.
\end{proof}
We also obtain the following version of Lemma \ref{Small-equal-degree} for $G$ with even number of vertices. Since its proof is very similar as that of  Lemma \ref{Small-equal-degree}, we present its details in Appendix.

\begin{lem}\label{beta n+2}
For any integer $n\geq 6$, we have either $\beta \geq \Delta-1$ or $\Delta\leq n+2$.
\end{lem}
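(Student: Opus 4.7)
The plan is to mimic closely the strategy Chen and Ma used to prove Lemma \ref{Small-equal-degree}, adapting from the $(2n+1)$-vertex setting to the $2n$-vertex setting so that the threshold shifts from $n+1$ to $n+2$. I would argue by contradiction: assume $\beta\le\Delta-2$ and $\Delta\ge n+3$, and aim to contradict $e(G)\ge n^2-1$. Fix $v_0$ with $d(v_0)=\Delta$. Since $\beta$ is the largest integer realized by two vertices, every value in $\{\beta+1,\beta+2,\ldots,\Delta\}$ is attained at most once in $G$; in particular $v_0$ is the unique vertex of degree $\Delta$. By Lemma \ref{beta=n} we may assume $\beta\le n+1$, and by Lemma \ref{beta=3} there exist distinct vertices $w_1,w_2$ with $d(w_1)=d(w_2)=\beta\ge 3$.

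Next, I would apply Lemma \ref{d(u)=d(v)} with $v=v_0$ to obtain the partition
\[
N(v_0)=T\cup R,\qquad T=\{u\in N(v_0):|N(u)\cap N(v_0)|\ge 2\},\qquad R=N(v_0)\setminus T.
\]
The vertices of $T$ carry pairwise distinct degrees, and no degree attained by a vertex in $T$ repeats anywhere in $N(v_0)$. Consequently at most one of $w_1,w_2$ lies in $T$: if $w_1\in T$, then no other vertex of $N(v_0)$ has degree $\beta$, so $w_2$ lies in $W:=V(G)\setminus(N(v_0)\cup\{v_0\})$. Writing $|W|=2n-1-\Delta\le n-4$, each $u\in R$ satisfies $d(u)\le 1+1+|W|=2n+1-\Delta$.

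The heart of the argument would be a degree-sum estimate
\[
2e(G)=\Delta+\sum_{u\in T}d(u)+\sum_{u\in R}d(u)+\sum_{w\in W}d(w).
\]
The degrees in $T$ are pairwise distinct values at most $\Delta-1$, and each value in $\{\beta+1,\ldots,\Delta-1\}$ occurs at most once in the whole of $G$, so I would bound $\sum_{u\in T}d(u)$ by a sum of distinct integers with exclusions forced by the uniqueness of high degrees and by the placement of $w_1,w_2$. Combined with $d(u)\le 2n+1-\Delta$ for $u\in R$ and $d(w)\le\Delta-1$ for $w\in W$, I expect a direct calculation to yield $2e(G)<2(n^2-1)$ whenever $\Delta\ge n+3$ and $n\ge 6$, contradicting the hypothesis.

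The main obstacle will be the tight accounting in the near-extremal regime $\Delta\in\{n+3,n+4\}$, where $|W|$ and hence the size of $R$ are just small enough to make the estimate close. Here one must carefully exploit the uniqueness of every degree above $\beta$ to save $|T|\cdot(\Delta-1)-\binom{\Delta}{2}$ type terms from the naive bound, and must split cases according to whether $w_1,w_2$ both lie in $R$, both in $W$, or are distributed one in each. This is exactly where the parity shift ``$n+1\to n+2$'' enters: the bookkeeping replays that of Lemma \ref{Small-equal-degree} with each constant adjusted by one, reflecting the removal of a single vertex from $V(G)$.
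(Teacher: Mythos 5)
Your setup matches the paper's: argue by contradiction from $\beta\le\Delta-2$ and $\Delta\ge n+3$, note that $v_0$ of degree $\Delta$ is unique and that every degree value above $\beta$ occurs at most once, split $N(v_0)$ using Lemma \ref{d(u)=d(v)} so that one part carries pairwise distinct degrees while the other part has degree at most $2n-\Delta+1$, and then maximize the degree sum. But the decisive step — actually carrying out that maximization and showing $2e(G)<2(n^2-1)$ — is exactly what you defer with ``I expect a direct calculation to yield,'' and you yourself flag the near-extremal regime as unresolved. In the paper this computation \emph{is} the proof: it occupies three cases (according to how $|B|$ compares with $\beta$ and how $\Delta+|B|-2n$ compares with $\beta$), each requiring an explicit formula for the optimal degree sequence $\lambda(n,\Delta,\beta,|B|)$ and a quadratic/discriminant analysis in $|B|$, $\beta$ and $\Delta$ to extract the contradiction for $n\ge 6$. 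Without this, the argument is a plan rather than a proof.

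A second, more technical point: the paper partitions $N(v_0)$ by a \emph{degree threshold}, $A=\{v: d(v)\ge 2n-\Delta+2\}$ versus $B=\{v: d(v)\le 2n-\Delta+1\}$, precisely so that the distinct degrees of $A$ are confined to the window $[2n-\Delta+2,\Delta-1]$ of length $2\Delta-2n-2$; this yields the key lower bound $|B|\ge 2n-\Delta+2$, guaranteeing a large mass of low-degree neighbors. Your partition by the condition $|N(u)\cap N(v_0)|\ge 2$ gives $T\supseteq A$ and $R\subseteq B$, and the pigeonhole on distinct degrees in $T$ only forces $|R|\ge 3$, which is far weaker than $|B|\ge 2n-\Delta+2$ when $\Delta$ is near $n+3$. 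You would instead have to exploit that a large set of neighbors with pairwise distinct degrees is itself very costly, together with the global uniqueness of degrees above $\beta$ across $T\cup W$; this may well close, but it is a different (and unverified) bookkeeping from the paper's, and it is precisely in the regime $\Delta\in\{n+3,n+4\}$ that you admit the estimate is tight. You should either switch to the degree-threshold partition and reproduce the bound $|B|\ge 2n-\Delta+2$, or supply the full optimization for your version.
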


For small $n$, we also have the following upper bound of $\Delta$.
\begin{lem}\label{345}
    For $n\in \{3,4,5\}$, we have $\Delta\leq2n-3$.
\end{lem}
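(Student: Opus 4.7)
The plan is to argue by contradiction, assuming $\Delta \ge 2n - 2$, while taking $3 \le \beta \le n$ as a working hypothesis justified by Lemmas~\ref{beta=n} and~\ref{beta=3} (since $\beta = n+1$ would force $G \cong K_{n+1,n-1}$, already matching the conclusion of Theorem~\ref{2n}). I will split on $\Delta \in \{2n-1, 2n-2\}$ and, within the tighter case, further subdivide based on whether the unique non-neighbor $v_1$ of the maximum-degree vertex $v_0$ satisfies $d(v_1) = \beta$.

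If $\Delta = 2n - 1$, fix $v_0$ adjacent to every other vertex. By the definition of $\beta$ there exist distinct $u, u' \in N(v_0)$ of common degree $\beta \ge 3$. Since $v_0 \in N(u)$ and the remaining $\beta - 1 \ge 2$ neighbors of $u$ all belong to $N(v_0)$, we have $|N(u) \cap N(v_0)| \ge 2$, so Lemma~\ref{d(u)=d(v)} applied to the pair $(u, v_0)$ forbids any other vertex of $N(v_0)$ from having degree $d(u) = \beta$, contradicting the presence of $u'$. If instead $\Delta = 2n - 2$, the inequality $\beta \le n < 2n - 2$ for $n \ge 3$ forces $v_0$ to be unique, and there is a unique vertex $v_1 \notin N(v_0) \cup \{v_0\}$, satisfying $N(v_1) \subseteq N(v_0)$. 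When $d(v_1) = \beta$, choose another degree-$\beta$ vertex $v_2 \in N(v_0)$, and since $|N(v_0) \cap N(v_1)| = d(v_1) = \beta \ge 3$, pick $v_3 \in (N(v_0) \cap N(v_1)) \setminus \{v_2\}$; then $v_1 v_3 v_0 v_2$ is a 3-path with equal-degree endpoints, a contradiction.

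In the remaining subcase $d(v_1) \ne \beta$, I first establish the auxiliary claim~(*): no three distinct vertices of $G$ share a common degree $\ge 3$. Suppose for contradiction that $v_2, v_3, v_4$ form such a triple of common degree $d \ge 3$. Since $d \le \beta \le n < 2n - 2 = d(v_0)$, none of them equals $v_0$; and since $v_1$ is the sole non-neighbor of $v_0$ apart from $v_0$ itself, at least two of them, say $v_3, v_4$, must lie in $N(v_0)$. Choosing $x \in N(v_2) \cap N(v_0)$, which is nonempty because at most two of $v_2$'s neighbors (namely $v_0$ and $v_1$) can escape $N(v_0)$, and then $v_i \in \{v_3, v_4\} \setminus \{x\}$, yields the forbidden 3-path $v_2 x v_0 v_i$. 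Combining~(*) with the uniqueness of $v_0$ and the facts that each degree above $\beta$ occurs at most once while each degree in $[3, \beta]$ occurs at most twice, a routine maximization of $\sum_v d(v)$ over $\beta \in \{3, \ldots, n\}$ yields $2e(G) \le 47$ when $n = 5$ and $2e(G) \le 29$ when $n = 4$, each contradicting $2e(G) \ge 2(n^2 - 1)$.

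The main obstacle is $n = 3$, where the same maximization returns only $2e(G) \le 16 = 2(n^2 - 1)$, so equality is possible and must be excluded by direct structural analysis. The degree sequence is forced to be $(4, 3, 3, 2, 2, 2)$, and the condition $d(v_1) \ne \beta = 3$ forces $d(v_1) = 2$. Applying Lemma~\ref{d(u)=d(v)} to each degree-$3$ vertex $u \in N(v_0)$, the possibility $|N(u) \cap N(v_0)| \ge 2$ would prohibit the other degree-$3$ vertex of $N(v_0)$ from having the same degree; hence $|N(u) \cap N(v_0)| = 1$, which forces $u$ to be adjacent to $v_1$. Consequently $N(v_1) = \{u_1, u_2\}$, where $u_1, u_2$ are the two degree-$3$ vertices. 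Writing $N(v_0) = \{u_1, u_2, w_1, w_2\}$ with $d(w_1) = d(w_2) = 2$, the two remaining edges form a perfect matching on $\{u_1, u_2, w_1, w_2\}$, giving exactly the three possibilities $\{u_1 u_2, w_1 w_2\}$, $\{u_1 w_1, u_2 w_2\}$, and $\{u_1 w_2, u_2 w_1\}$; a direct check exhibits a 3-path between two degree-$2$ vertices in each (for instance $w_1 v_0 u_1 v_1$, $w_1 u_1 v_0 w_2$, and $w_1 u_2 v_0 w_2$ respectively), completing the contradiction.
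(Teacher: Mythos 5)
Your proof is correct and follows essentially the same route as the paper: the same split on $\Delta\in\{2n-1,2n-2\}$, the same use of Lemma~\ref{d(u)=d(v)}, the same claim that no three vertices share a degree at least $3$, and the same degree-sum bounds for $n=4,5$. The only divergence is the $n=3$ endgame, where you enumerate the three possible perfect matchings on $N(v_0)$, whereas the paper finishes more quickly by noting $d(v_1)=2$ and that for any $y\in N(v_1)\cap N(v_0)$ and the two other degree-$2$ vertices $v_2,v_3$, at least one of $v_1yv_0v_2$, $v_1yv_0v_3$ is a forbidden path; both arguments are valid.
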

\begin{proof}
    Suppose  that $\Delta\ge2n-2$. It follows from Lemma \ref{beta=n} that there exists a unique vertex $v_0$ in $G$ with $d(v_0)=\Delta$.
If \(\Delta=2n-1\), then there exist two vertices $v_1$ and $v_2$ in $N(v_0)$ with \(d(v_1)=d(v_2)=\beta\geq3 \) by Lemma \ref{2n+1 beta 3}. This implies that \(|N(v_0)\cap N(v_i)| \geq 2\) for each \(i\in\{1,2\}\) , which contradicts Lemma \ref{d(u)=d(v)}\footnote{Note that Lemma \ref{d(u)=d(v)} also holds for graphs with an even number of vertices.}.
If \(\Delta = 2n - 2\), then there exists a unique vertex \(v_1\) such that \(v_1 \notin N(v_0)\cup\{v_0\}\). If \(d(v_1) = \beta\), then \(|N(v_0) \cap N(v_1)| = d(v_1) \geq 3\)  by Lemma \ref{beta=3}. Suppose that \(v_2\) and \(v_3\) are two distinct vertices such that \(d(v_2) = \beta\) and \(v_3 \in N(v_0) \cap N(v_1)\). Then \(v_1v_3v_0v_2\) is a path of length three, and \(d(v_1) = d(v_2)\), a contradiction.
If \(d(v_1) \neq \beta\), then there cannot exist three distinct vertices in \(G\) with the same degree at least 3. Otherwise, suppose that there exist three vertices \(v_2, v_3, v_4\) such that \(d(v_2) = d(v_3) = d(v_4) \geq 3\). Then \(N(v_i) \cap N(v_0) \neq \emptyset\)  for each \(i \in \{2, 3, 4\}\). Without loss of generality, there exists a vertex \(x \in N(v_2) \cap N(v_0)\). Then at least one of the paths \(v_2xv_0v_3\) and \(v_2xv_0v_4\) is a path of length three with equal-degree endpoints, a contradiction.
If \( n = 5 \), then \( 48 = 2(n^2 - 1) \leq 2e(G) \leq 8 + 7 + 6 + 5 + 5 + 4 + 4 + 3 + 3 + 2 = 47 \), a contradiction. If \( n = 4 \), then \( 30 = 2(n^2 - 1) \leq 2e(G) \leq 6 + 5 + 4 + 4 + 3 + 3 + 2 + 2 = 29 \), also a contradiction. If \( n = 3 \), then \( 16 = 2(n^2 - 1) \leq 2e(G) \leq 4 + 3 + 3 + 2 + 2 + 2 = 16 \), which implies \( d(v_1) = 2 \). Without loss of generality, let \( v_2 \) and \( v_3 \) be two vertices such that \( d(v_2) = d(v_3) = 2 \). Let \( y \in N(v_1) \cap N(v_0) \). Then at least one of the paths \( v_1yv_0v_2 \) and \( v_1yv_0v_3 \) is a path of length three with equal-degree endpoints, a contradiction.
\end{proof}

Combining Lemmas  \ref{beta=n}, \ref{beta n+2} and \ref{345}, we deduce that \(\Delta \leq n + 2\),  \(\beta \leq n\), and
\begin{itemize}
    \item \(n \geq 5\) if \(\Delta = n + 2\);
    \item \(n \geq 4\) if \(\Delta = n + 1\);
    \item \(n \geq 3\) if \(\Delta = n\).
\end{itemize}
Note that $\Delta\ge n$. In what follows, we prove Theorem \ref{2n} by discussing the possible values of \(\Delta\).

\textbf{Case 1:} \(\Delta = n + 2\). There exists a unique vertex \(v_0\) in \(G\) with \(d(v_0) = n + 2\) and \(|\overline{N}(v_0)| = n - 3\). Moreover, the graph \(G\) has at least \(2n - 7\) vertices with degree \(n\). Otherwise, we have
\[
2(n^2 - 1) \leq 2e(G) \leq(n + 2) + (n + 1) + (2n - 8)n + 6(n - 1) = 2n^2 - 3,
\]
which is a contradiction. Since \(n \geq 5\) and \(2n - 7 - (n - 3) = n - 4\), there must exist a vertex \(v_1\) such that \(d(v_1) = n\) and \(v_1 \in N(v_0)\). If there exists another vertex \(v_2\) such that \(d(v_2) = n\) and \(v_2 \in N(v_0)\), then for \(i \in \{1, 2\}\),
\[
|N(v_0) \cap N(v_i)| =|N(v_i) \setminus (\{v_0\} \cup \overline{N}(v_0))|\geq n - 1 - (n - 3) = 2,
\]
which contradicts Lemma \ref{Small-equal-degree}. This implies that there is exactly one vertex in \(N(v_0)\) with degree \(n\). Therefore, there must exist a vertex \(v_2\) such that \(d(v_2) = n\) and \(v_2 \in \overline{N}(v_0)\). Since \(n \geq 5\), we have
\[
|N(v_2) \cap N(v_0)|  =|N(v_2) \setminus (\overline{N}(v_0) \setminus \{v_2\})|\geq n - (n - 3-1) = 4.
\]
Thus, there must be a vertex \(x \in N(v_0)\), different from \(v_1\), such that \(x \in N(v_2) \cap N(v_0)\). Then the path \(v_2xv_0v_1\) is a path of length 3 with \(d(v_2) = d(v_1)\), leading to a contradiction.

\textbf{Case 2:} \(\Delta = n + 1\). There exists a unique vertex \(v_0\) in \(G\) with \(d(v_0) = n + 1\) and \(|\overline{N}(v_0)| = n - 2\). Moreover, the graph \(G\) has at least \(2n - 4\) vertices with degree \(n\). Otherwise, we have
\[
2(n^2 - 1) \leq 2e(G) \leq (n + 1) + (2n - 5)n + 4(n - 1) = 2n^2 - 3,
\]
which is a contradiction. Since \(n \geq 4\) and \(2n - 4 - (n - 2) = n - 2 \geq 2\), there must exist vertices \(v_1\) and \(v_2\) such that \(d(v_1) = d(v_2) = n\) and \(v_1, v_2 \in N(v_0)\). If there exists a vertex \(v_3 \neq v_1, v_2\) such that \(d(v_3) = n\) and \(v_3 \in N(v_0)\), then for each \(i \in \{1, 2, 3\}\),
\[
|N(v_i) \cap N(v_0)| = |N(v_i) \setminus (\{v_0\} \cup \overline{N}(v_0))| \geq (n - 1) - (n - 2) = 1.
\]
Without loss of generality, there exists a vertex \(x \in N(v_1) \cap N(v_0)\). Then at least one of the paths \(v_1xv_0v_2\) and \(v_1xv_0v_3\) is a path of length three with equal-degree endpoints, a contradiction. This implies that there are exactly two vertices in \(N(v_0)\) with degree \(n\). Therefore, there must exist a vertex \(v_3\) such that \(d(v_3) = n\) and \(v_3 \in \overline{N}(v_0)\). Since \(n \geq 4\), we have
\[
|N(v_3) \cap N(v_0)| = |N(v_3) \setminus (\overline{N}(v_0) \setminus \{v_3\})| \geq n - (n - 2 - 1) = 3.
\]
Thus, there must be a vertex \(x \in N(v_0)\), different from \(v_1\) and \(v_2\), such that \(x \in N(v_3) \cap N(v_0)\). Then the path \(v_3xv_0v_1\) is a path of length 3 with \(d(v_3) = d(v_1)\), a contradiction.

\textbf{Case 3:} \(\Delta = n\). The graph \(G\) has at least \(2n - 2\) vertices with degree \(n\). Otherwise, we have
\[
2(n^2 - 1) \leq 2e(G) \leq (2n - 3)n + 3(n - 1) = 2n^2 - 3,
\]
a contradiction. Since \(n \geq 3\) and \(2n - 2 - (n + 1) = n - 3\), there must exist two non-adjacent vertices \(v_1\) and \(v_2\) such that \(d(v_1) = d(v_2) = n\) unless \(n = 3\) and the four vertices of degree 3 in \(G\) form a complete graph \(K_4\). In the latter case, there clearly exists a path of length 3 with equal degree endpoints, a contradiction. Therefore, \(v_1\) and \(v_2\) are non-adjacent. Note that
\[
|N(v_1) \cap N(v_2)| = |N(v_1) \setminus (\overline{N}(v_2) \cup \{v_1\})| \geq n - (n - 1 - 1) = 2.
\]
Without loss of generality, let \(v_3, v_4 \in N(v_1) \cap N(v_2)\). If either \(v_3\) or \(v_4\) has degree \(n\), then at least one of the paths \(v_1v_4v_2v_3\) or \(v_1v_3v_2v_4\) is a path of length 3 with equal-degree endpoints, a contradiction. If both \(v_3\) and \(v_4\) do not have degree \(n\), then we must have \(d(x) = n\) for any vertex \(x \in N(v_1) \setminus \{v_3, v_4\}\) as \(G\) has at least \(2n - 2\) vertices of degree \(n\). Then the path \(v_2v_3v_1x\) is a path of length 3 with \(d(x) = d(v_2) = n\), a contradiction.
Finally, we complete the proof of Theorem \ref{2n}.

\section{Concluding remarks}
In this paper, we prove that, for all $n\ge2$, every \((2n + 1)\)-vertex graph with at least \(n^{2}+n+1\) edges contains two vertices of the same degree joined by a path of length three. This provides a complement of the result established by Chen and Ma \cite{CM2025}, and thus resolve the question of Erd\H{o}s and Hajnal completely.

In the end of this paper, we address the following general problems raised by Chen and Ma \cite{CM2025} on paths with equal-degree endpoints. For any positive integers \(\ell\) and \(n\), let \(p_{\ell}(n)\) denote the maximum number of edges in an \(n\)-vertex graph that contains no two vertices of equal degree connected by a path of length \(\ell\). Chen and Ma \cite{CM2025} determined \(p_{\ell}(n)\) for each $\ell\in\{1,2,3\}$. For odd $\ell\ge5$, they conjectured that the complete bipartite graph \(K_{n,n + 1}\) remains extremal.
\begin{conj}[Chen and Ma \cite{CM2025}]
For any odd integer \(\ell\geq5\) and sufficiently large \(n\), it holds that
\[p_{\ell}(2n + 1)=n^{2}+n.\]
\end{conj}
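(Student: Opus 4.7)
The plan is to adapt the $\beta$-parameter framework developed in this paper, combined with a stability-type reduction to near-bipartite structure. Fix an odd integer $\ell\ge 5$ and let $n$ be sufficiently large. Suppose for contradiction that $G$ is a $(2n+1)$-vertex graph with $e(G)\ge n^2+n+1$ edges such that no two equal-degree vertices are joined by a path of length $\ell$. Let $\beta$ be the largest integer such that $G$ contains two vertices of degree $\beta$; as in Lemma~\ref{2n+1 beta 3} a degree-sequence bound forces $\beta$ to not be too small. The first step is to push analogues of Lemma~\ref{c-Complement-G} and Lemma~\ref{Large-equal-degree} through: given $u,v$ with $d(u)=d(v)=\beta$, the absence of a length-$\ell$ path from $u$ to $v$ imposes vanishing conditions on counts of $(\ell-1)$-walks between the layers $A_u, A_v, B, D$, and these can be iterated by pigeonhole to extract additional equal-degree pairs.

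The second and critical step is a stability reduction: show that any $G$ satisfying the hypothesis with $e(G)$ close to $n^2+n$ must be $o(n^2)$-close to the bipartite extremiser $K_{n,n+1}$. The cleanest route is a bipartite double-cover argument: odd paths in $G$ lift to walks in $G\times K_2$, and the equal-degree-endpoint constraint lifts to a forbidden bipartite configuration whose Zarankiewicz-type extremal number is $(1+o(1))n^2$. A removal lemma then gives approximate bipartiteness. Once $G$ is near $K_{n,n+1}$, let $(X,Y)$ with $|X|=n$, $|Y|=n+1$ be a minimising bipartition. Since $e(G) > e(K_{n,n+1})$ there must be either an edge inside one of the parts or an imbalance not present in $K_{n,n+1}$; in either case the bipartite host has huge minimum co-degree, which one exploits to build a genuine length-$\ell$ path between two equal-degree vertices on the same side.

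The main obstacle, in my view, is making the length-$\ell$ path construction precise: unlike the $\ell=3$ case treated here, one must control length and parity exactly while avoiding vertex repetitions. A depth-first-search or rotation-extension argument inside the dense near-bipartite host should succeed, but because parity concerns now depend on $\ell$, the path-embedding lemma will likely require $n$ to be much larger than in the $\ell=3$ theorem, with constants deteriorating in $\ell$. A secondary obstacle is the stability reduction itself; an alternative spectral approach (exploiting that $K_{n,n+1}$ is characterised spectrally among graphs of its edge density) might bypass the removal-lemma route but appears harder to combine cleanly with the degree-equality constraint, so I would pursue the removal-lemma path first.
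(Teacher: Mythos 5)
This statement is an open conjecture of Chen and Ma; the paper you are reading does not prove it, and explicitly records it as unresolved in the concluding remarks. So there is no ``paper proof'' to match, and your text is a research programme rather than a proof. Judged on its own terms, it has at least two genuine gaps. First, the claim that the absence of a length-$\ell$ path between $u$ and $v$ ``imposes vanishing conditions on counts of $(\ell-1)$-walks between the layers $A_u, A_v, B, D$'' does not hold for $\ell\ge5$. The entire engine of the $\ell=3$ argument is that a single edge between $N(u)$ and $N(v)$ (or inside $B$) immediately completes a path of length exactly three, which is why $e(A_u,B)=e(A_v,B)=e(B)=e(A_u,A_v)=0$ in \eqref{ABuv}. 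For $\ell\ge5$ such an edge must be extended to reach length exactly $\ell$ without repeating vertices, so no analogous identities are forced; walks are not paths, and nothing ``vanishes.'' Without these identities the complement-counting in \eqref{Complement-G} and Lemma~\ref{c-Complement-G} has no analogue, so the first step of your plan does not get off the ground.

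Second, the stability step is not substantiated. The degree-equality constraint is not a subgraph condition, so it has no ``Zarankiewicz-type extremal number,'' and the graph removal lemma (which concerns copies of a fixed subgraph) does not apply to the configuration ``path of length $\ell$ whose endpoints happen to have equal degree in the host graph.'' The bipartite double cover correctly encodes the parity of $\ell$, but it does not encode degrees, and no known transference turns the hypothesis into an approximate-bipartiteness conclusion. Even granting near-bipartiteness, the final path-embedding step (finding a path of length exactly $\ell$ between two specified equal-degree vertices while controlling parity and avoiding repetitions) is plausible but unproved. In short: the difficulties you flag at the end are precisely the reasons the problem is still open, and the proposal does not overcome them.
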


For even $\ell\ge2$, the authors show that $p_{\ell}(2n)\ge (n^2+n)/2$ by considering the half graph. However, it is unclear whether this is optimal. They tend to believe that the behavior differs significantly between odd and even $\ell$, and therefore proposed the following problem.
\begin{prob}[Chen and Ma \cite{CM2025}]
Determine the exact value of \(p_{\ell}(2n)\) for all even \(\ell\) and sufficiently large \(n\).
\end{prob}

\section*{Appendix: Proof of Lemma \ref{beta n+2}}\label{appp}

\begin{proof}[\bf  Proof of Lemma \ref{beta n+2}]
    Suppose for a contradiction that $\beta \leq \Delta-2$ and $\Delta\geq n+3$.
Then there exists a unique vertex $v_0$ in $G$ with $d(v_0)=\Delta$.
We divide $N(v_0)$ into the following two parts
\begin{align*}
A:=\{ v\in N(v_0) ~|~ d(v)\geq 2n-\Delta+2 \}
\,\:\text{and}\,\:
B:=\{ v\in N(v_0)~ |~ d(v)\leq 2n-\Delta +1 \}.
\end{align*}
Note that any $v\in A$ satisfies that $d(v)\in [2n-\Delta+2,\Delta-1]$ as $\beta \leq \Delta-2$ and $\Delta\geq n+3$.
Note that $|N(v)\cap N(v_0)|=|N(v)\setminus (\{v_0\}\cup \overline{N}(v_0))|\geq (2n-\Delta+2)-(2n-\Delta)=2$ for any vertex $v\in A$. This yields that $v$ has at least two neighbors in $N(v_0)$.
By Lemma \ref{d(u)=d(v)}\footnote{Recall that Lemma \ref{d(u)=d(v)} still holds for graphs with an even number of vertices.}, we know that $d(v)$ is distinct from the degree of any other vertex in $N(v_0)$.
In particular, all vertices in $A$ have distinct degrees.
Since there are at most $2\Delta - 2n - 2$ possible values for the degree of a vertex in $A$, we have $2\Delta - 2n - 2 \geq |A| = \Delta - |B|$. This means that
\begin{align}\label{equ:|B|>=}
|B|\geq 2n-\Delta +2.
\end{align}

In the remainder of the proof, we estimate the number of edges in $G$ to derive a final contradiction.
As $\sum_{v\in B} d(v)$ is evidently bounded from above by definition,
our crucial step is to maximize $\sum_{v \in A \cup \overline{N}(v_0)} d(v)$, subject to
\begin{itemize}
\item[(a)] all vertices in $A$ have distinct degrees;
\item[(b)] there are no two vertices in $A \cup \overline{N}(v_0)$ with the same degree greater than $\beta$.
\end{itemize}
Let $\mathcal{A}$ be the sequence of degrees $d(v)$ for all $v \in A$, and let $\mathcal{B}$ be the sequence of degrees $d(v)$ for all $v \in \overline{N}(v_0)$. We transform this to the following maximization problem:
Given integers $n,\Delta,\beta$ and $|B|$,
$$\mbox{ determine } \lambda(n,\Delta,\beta, |B|):=\max_{\mathcal{A}, \mathcal{B}} \sum_{k \in \mathcal{A} \cup \mathcal{B}} k,$$
where
\begin{itemize}
\item[(1)] $\mathcal{A}$ is a sequence of $\Delta-|B|$ distinct integers in $\{1, 2, \dots, \Delta - 1\}$,
\item[(2)] $\mathcal{B}$ is a sequence of $2n-1 - \Delta$ integers in $\{0,1,\dots, \Delta - 1\}$, and
\item[(3)] no two elements in $\mathcal{A} \cup \mathcal{B}$ share the same value greater than $\beta$.
\end{itemize}
Note that when $\lambda(n, \Delta, \beta, |B|)$ is maximized, every element in $\mathcal{B}$ is at least $\beta$, and we have
\begin{align}\label{equ:opt}
\sum_{v \in A \cup \overline{N}(v_0)} d(v)\leq \lambda(n,\Delta,\beta, |B|).
\end{align}
Now, we proceed our proof according to the following three cases.

\bigskip
{\bf Case 1}. $|B|\leq \beta$.
\medskip

We have $|A| = \Delta - |B| \geq \Delta - \beta$. In this case, we claim that
\begin{align}\label{equ:case1}
\lambda(n, \Delta, \beta, |B|) = \sum_{k = |B|}^{\Delta - 1} k + (2n - 1 - \Delta)\beta.
\end{align}
We first assert that  $\mathcal{A} \supseteq \{\beta, \beta + 1, \ldots, \Delta - 1\}$ when the maximum is achieved.
Otherwise, there exist $k \in \mathcal{A}$ and $\ell \notin \mathcal{A}$ with $k < \beta \leq \ell \leq \Delta - 1$.
If $\ell$ is not included in $\mathcal{B}$, we replace $k$ with $\ell$ in $\mathcal{A}$.
If $\ell$ is included in $\mathcal{B}$, we replace $k$ with $\ell$ in $\mathcal{A}$ and also replace $\ell$ with $\beta$ in $\mathcal{B}$.
In both cases, conditions (1), (2), and (3) remain satisfied,
however the sum $\sum_{k \in \mathcal{A} \cup \mathcal{B}} k$ strictly increases, a contradiction.
Thus, we can easily deduce that all elements in $\mathcal{B}$ must be $\beta$, and then $\mathcal{A} = \{|B|, |B| + 1, \ldots, \Delta - 1\}$. This proves \eqref{equ:case1}.

Now using \eqref{equ:opt} and \eqref{equ:case1}, we can derive that
\begin{align}\notag
\sum_{v\in V(G)}d(v)&= \Delta+ \sum_{v\in A\cup \overline{N}(v_0)}d(v) + \sum_{v\in B}d(v)\le \Delta+\left(\sum_{k=|B|}^{\Delta-1}k + (2n-1-\Delta)\beta\right) + |B|(2n-\Delta+1)
\\ &= -\frac{1}{2}|B|^2+ (2n-\Delta+\frac{3}{2}) |B|+(2n-1-\Delta)\beta+\frac{1}{2}\Delta^2+\frac{1}{2}\Delta.\label{equ:Case1-1}
\end{align}
Here, we treat the above rightmost expression in \eqref{equ:Case1-1} as a quadratic polynomial $f_1(|B|)$ in the variable $|B|$.
By \eqref{equ:|B|>=}, we have $|B|\ge 2n-\Delta +2$, and $f_1(|B|)$ is decreasing in this range.
Thus, we deduce that
\begin{align}\notag
\sum_{v\in V(G)}d(v)\le f_1(|B|) &\le f_1(2n-\Delta +2)=2n^2+3n-2n\Delta+\Delta^2-\Delta+1+(2n-1-\Delta)\beta.
\end{align}
Since $\sum_{v\in V(G)}d(v)=2e(G)\geq 2(n^2-1)$, we further derive that
\begin{align}\label{18}
(2n-1-\Delta)\beta-2n\Delta+3n+\Delta^2 -\Delta+3\ge 0.
\end{align}
Clearly $\Delta\leq 2n-1$.
If $2n-1-\Delta=0$, then inequality \eqref{18} is equivalent to $n\le 5$, a contradiction to our assumption that $n\geq 6$.
It follows that $2n-1-\Delta\geq 1$. Then \eqref{18} implies that
\begin{align*}
\beta \ge \frac{2n\Delta-3n-\Delta^2 +\Delta-3}{2n-1-\Delta}= \Delta-2+\frac{n-5}{2n-1-\Delta}> \Delta-2,
\end{align*}
which contradicts our assumption that $\beta \le \Delta-2$.
This finishes the proof of Case 1.

\bigskip
{\bf Case 2}. $|B|\geq \beta+1$ and $\Delta+|B|-2n\ge \beta$.
\medskip

Note that the combined sequence $\mathcal{A} \cup \mathcal{B}$ has $2n-1 - |B|$ elements.
Since $\Delta + |B| - 2n \geq \beta $,
condition (3) implies that the optimal configuration for $\mathcal{A} \cup \mathcal{B}$ is $\{\Delta - 1, \Delta - 2, \ldots, \Delta + |B| - 2n+1\}$,
for which conditions (1) and (2) remain satisfied. Thus
\begin{align}\label{equ:case2}
\lambda(n, \Delta, \beta, |B|) = \sum_{k = \Delta + |B| - 2n+1}^{\Delta - 1} k.
\end{align}
Similarly to the previous case, using \eqref{equ:opt} and \eqref{equ:case2}, we can derive that
\begin{align}\notag
\sum_{v\in V(G)}d(v)&= \Delta+ \sum_{v\in A\cup \overline{N}(v_0)}d(v) + \sum_{v\in B}d(v)\le \Delta+ \left(\sum_{k=\Delta+ |B|-2n+1}^{\Delta-1}k\right)+ |B|(2n-\Delta+1)
\\ &=-\frac{1}{2}|B|^2+ (4n-2\Delta+\frac{1}{2} ) |B| -2n^2+2n\Delta+n.\label{equ:Case2-1}
\end{align}
Now we treat the above rightmost expression in \eqref{equ:Case2-1} as a quadratic polynomial $f_2(|B|)$ in the variable $|B|$.
Since $|B|$ is an integer, it is easy to deduce that
\begin{align}\label{equ:Case2-2}
\sum_{v\in V(G)}d(v)\le f_2(|B|)&\le f_2(4n- 2\Delta)=2\Delta^2-(6n+1)\Delta+6n^2+3n.
\end{align}
Since $\sum_{v\in V(G)}d(v)=2e(G)\ge 2(n^2-1)$, we have
\begin{align}\label{equ:Case2-3}
2\Delta^2-(6n+1)\Delta+4n^2+3n+2\ge 0.
\end{align}
Consider the expression above as a quadratic polynomial in $\Delta$, and elementary calculations show that either
\begin{align}\label{equ:case2-Delta}
\Delta\ge \frac{6n+1+\sqrt{4n^2-12n-15}}{4}>2n-1, \mbox{ ~or~ } \Delta\le \frac{6n+1-\sqrt{4n^2-12n-15}}{4}< n+3.
\end{align}
The first inequality cannot hold, and the second one contradicts our assumption that $\Delta \geq n + 3$. This completes the proof for Case~2.

\bigskip
{\bf Case 3}. $|B|\geq \beta+1$ and $\Delta+|B|-2n\leq \beta-1$.
\medskip

With the optimization reasoning from the previous cases in mind,
it is straightforward to see that the optimal configuration in this case consists of all elements of $\mathcal{A}$ and some elements of $\mathcal{B}$ forming $\{\Delta - 1, \Delta - 2, \ldots, \beta + 1\}$,
while the remaining elements of $\mathcal{B}$ are all $\beta$.
Therefore
\begin{align}\label{equ:case3}
\lambda(n, \Delta, \beta, |B|)=\sum_{k=\beta+1}^{\Delta-1}k + (2n -|B| -\Delta+\beta)\beta.
\end{align}
Then using \eqref{equ:opt} and \eqref{equ:case3}, we have
\begin{align}\notag
\sum_{v\in V(G)}d(v)&= \Delta+ \sum_{v\in A\cup \overline{N}(v_0)}d(v) + \sum_{v\in B}d(v)\\ \notag
&\leq \Delta+\left(\sum_{k=\beta+1}^{\Delta-1}k + (2n -|B| -\Delta+\beta)\beta\right) + |B|(2n-\Delta+1)\\ \label{equ:Case3-1}
&= \Delta+\sum_{k=\beta+1}^{\Delta-1}k + (2n-\Delta+\beta)\beta + |B|(2n-\Delta+1-\beta).
\end{align}
Now according to this formula, we further divide Case~3 into the following two subcases.

\bigskip

{\bf Subcase 1}. $2n-\Delta+1-\beta\geq 0$.

Since $|B|\leq \Delta$, we deduce from \eqref{equ:Case3-1} that
\begin{align}\notag
\sum_{v\in V(G)}d(v) &\le \Delta+\sum_{k=\beta+1}^{\Delta-1}k + (2n -\Delta+\beta)\beta + \Delta(2n-\Delta+1-\beta)
\\ &=\frac{1}{2}\beta^2+( 2n-2\Delta-\frac{1}{2}) \beta+2n\Delta-\frac{1}{2}\Delta^2+\frac{3}{2}\Delta.\label{equ:Case3-sub1}
\end{align}
Consider the above rightmost expression in \eqref{equ:Case3-sub1} as a quadratic polynomial $f_3(\beta)$ in the variable $\beta$.
Using Lemma \ref{beta=3} and the assumption of this subcase, $3\leq \beta \leq 2n-\Delta+1$. Therefore,
\begin{align}\label{equ:max-1}
\sum_{v\in V(G)}d(v)\le f_3(\beta) \le \max \{ f_3(3), f_3(2n-\Delta+1) \}.
\end{align}

First, suppose the above maximum is achieved by $f_3(3)$. Then
\begin{align*}
\sum_{v\in V(G)}d(v)\le f_3(3)=-\frac{1}{2}\Delta^2 + \left(2n-\frac{9}{2}\right) \Delta+6n+3.
\end{align*}
Since $\sum_{v\in V(G)}d(v)=2e(G)\geq 2(n^2-1)$, we derive that
\begin{align*}
-\frac{1}{2}\Delta^2 + \left(2n-\frac{9}{2}\right) \Delta-2n^2+6n+5\ge 0.
\end{align*}
View the expression above as a quadratic polynomial in the variable \(\Delta\).
Then its discriminant
$\left(2n - \frac{9}{2}\right)^2 - 2(2n^2 - 6n - 5) = -6n + \frac{121}{4}$
must be non-negative, that is,
$n\leq \frac{121}{24}$, a contradiction.
Now we may assume that the maximum in \eqref{equ:max-1} is achieved by $f_3(2n-\Delta+1)$, implying
\begin{align*}
\sum_{v\in V(G)}d(v)\le f_3(2n-\Delta+1)=2\Delta^2-(6n+1)\Delta+6n^2+3n .
\end{align*}
Note that this inequality is identical to \eqref{equ:Case2-2}.
We complete the proof of this subcase through the same analysis via \eqref{equ:Case2-3} and \eqref{equ:case2-Delta}.

\bigskip
{\bf Subcase 2}. $2n-\Delta+1-\beta< 0$.

In this subcase, the last term of the rightmost expression in \eqref{equ:Case3-1} increases as $|B|$ decreases.
Using the condition $|B|\ge \beta +1$, we deduce from \eqref{equ:Case3-1} that
\begin{align}\notag
\sum_{v\in V(G)}d(v) &\le \Delta+\sum_{k=\beta+1}^{\Delta-1}k + (2n -\Delta-1)\beta + (\beta +1)(2n-\Delta+1)
\\ &=-\frac{1}{2}\beta^2+ (4n-2\Delta-\frac{1}{2}) \beta +2n+\frac{1}{2}\Delta^2- \frac{1}{2}\Delta+1.\label{equ:Case3-sub2}
\end{align}
View the expression above as a quadratic  polynomial $f_4(\beta)$ in the variable $\beta$.
Since $\beta$ is an integer, \eqref{equ:Case3-sub2} implies that
\begin{align}\notag
\sum_{v\in V(G)}d(v) \le f_4(\beta) \le f_4(4n-2\Delta-1)=\frac{5}{2} \Delta^2 - (8n-\frac{1}{2})\Delta +8n^2+1.
\end{align}
Since $\sum_{v\in V(G)}d(v)\geq 2(n^2-1)$, through elementary computation, this leads to that either
\begin{align}\notag
\Delta\ge \frac{8n-\frac{1}{2}+\sqrt{4n^2-8n-\frac{119}{4}}}{5} >2n-1 \mbox{~ or ~}
\Delta\le \frac{8n+\frac{1}{2}-\sqrt{4n^2-8n-\frac{119}{4}}}{5}< \frac{4}{3}n+\frac{1}{2}.
\end{align}
The former inequality clearly cannot hold, so the later inequality $\Delta<\frac{4}{3}n+\frac{1}{2}$ holds.
This, together with the assumption that \(\beta \leq \Delta - 2\), implies that
\begin{align*}
\beta\le \Delta-2< 4n-2\Delta -\frac{1}{2}.
\end{align*}
Now using \eqref{equ:Case3-sub2} again, we deduce that
\begin{align*}
\sum_{v\in V(G)}d(v) \leq f_4(\beta) \leq f_4(\Delta-2) =-2\Delta^2+(4n+5)\Delta-6n.
\end{align*}
Substituting $\sum_{v\in V(G)}d(v)\ge 2(n^2-1)$, we derive
\begin{align}\label{equ:Case3-sub2-disc}
-2\Delta^2+(4n+5)\Delta-2n^2-6n+2\ge 0.
\end{align}
The discriminant of the leftmost expression above equals $(4n+5)^2-8(2n^2+6n-2)=41-8n$, which is strictly negative.
This is a contradiction to \eqref{equ:Case3-sub2-disc} for its non-negativity, proving Subcase~2.
Finally, we complete the proof of Lemma \ref{beta n+2}.
\end{proof}

\end{document}